\theoremstyle{plain}
\newtheorem{theorem}{Theorem}[section]
\newtheorem{lemma}[theorem]{Lemma}
\newtheorem*{de-lemma}{Lemma}
\theoremstyle{remark}
\theoremstyle{definition}
\newcommand{\dd}{\mathrm{d}}
\newcommand{\R}{\mathbb{R}}\newcommand{\N}{\mathbb{N}}
\DeclareMathOperator{\supp}{supp}
\DeclareMathOperator{\ee}{{\bf e}}
\DeclareMathOperator{\VV}{{\bf V}}
\begin{document}

\title[Double layered solutions to the extended Fisher-Kolmogorov P.D.E.]{Double layered solutions to the extended Fisher-Kolmogorov P.D.E.}

\author{Panayotis Smyrnelis} \address[P.~ Smyrnelis]{Institute of Mathematics,
Polish Academy of Sciences, 00-656 Warsaw, Poland, and Basque Center for Applied Mathematics, 48009 Bilbao, Spain}
\email[P. ~Smyrnelis]{psmyrnelis@impan.pl, psmyrnelis@bcamath.org}

\date{}

\maketitle
\begin{abstract}
We construct double layered solutions to the extended Fisher-Kolmogorov P.D.E., under the assumption that the set of minimal heteroclinics of the corresponding O.D.E. satisfies a separation condition. The aim of our work is to provide for the extended Fisher-Kolmogorov equation, the first examples of two-dimensional minimal solutions, since these solutions play a crucial role in phase transition models, and are closely related to the De Giorgi conjecture.

\end{abstract}

\section{Introduction and Statements}
We consider the extended Fisher-Kolmogorov P.D.E.
\begin{equation}\label{system}
\Delta^2 u(t,x) -\beta \Delta u(t,x)+ \nabla W(u(t,x))=0, \ u:\R^2\to\R^m \ m\geq 1, \ \beta> 0, \ (t,x)\in\R^2,
\end{equation}
where $W$ is a function such that
\begin{subequations}\label{w13}
\begin{equation}\label{w1}
\text{$W\in C^{2}(\R^m; \R)$ is nonnegative, and has exactly $2$ zeros $a^-$ and $a^+$,}
\end{equation}
\begin{equation}\label{w2}
\text{$\nabla^2 W(u)(\nu,\nu)\geq c$, $\forall u\in\R^m$: $|u-a^\pm|\leq r$, $\forall \nu \in \R^m$: $|\nu|=1$, with $r,c>0$},
\end{equation}
\begin{equation}\label{w3}
\liminf_{|u|\to\infty} W(u)>0.
\end{equation}
\end{subequations}
That is, $W$ is a double well potential \eqref{w1}, with nondegenerate minima \eqref{w2}, satisfying moreover the standard asymptotic condition \eqref{w3} to ensure the boundedness of finite energy orbits.
To clarify the notation, we point out that $\nabla W(u(t,x))$ is the gradient of $W$ evaluated at $u(t,x)$, while $\nabla^2W(u)(\nu,\nu)$ stands for the quadratic form $\sum_{i,j=1}^m\frac{\partial^2 W(u)}{\partial u_i \partial u_j}\nu_i \nu_j$, $\forall u=(u_1,\ldots,u_m)\in\R^m$, $\forall \nu=(\nu_1,\ldots,\nu_m)\in\R^m$. We also denote respectively by $|\cdot|$ and $\cdot$, the Euclidean norm and inner product. Finally, given smooth maps $u:\R^2\to \R^m$, and $\phi:\R^2\to\R^m$, we set
\begin{itemize}
\item $|\nabla u|^2:=\sum_{i=1}^2\big|\frac{\partial u}{\partial x_i}\big|^2$, 
\item $|\nabla^2 u|^2:=\sum_{i,j=1}^2\big|\frac{\partial^2 u}{\partial x_i\partial x_j}\big|^2$, 
\item $\nabla u\cdot\nabla\phi:=\sum_{i=1}^2\big(\frac{\partial u}{\partial x_i}\cdot\frac{\partial \phi}{\partial x_i}\big)$,
\item $\nabla^2 u\cdot \nabla^2 \phi:=\sum_{i,j=1}^2\big(\frac{\partial^2 u}{\partial x_i\partial x_j}\cdot\frac{\partial^2 \phi}{\partial x_i\partial x_j}\big)$.
\end{itemize}

In the scalar case ($m=1$), by taking the Allen-Cahn potential $W(u)=\frac{1}{4}(u^2-1)^2$, 
we obtain the standard Fisher-Kolmogorov O.D.E.
\begin{equation}\label{efk}
\frac{\dd^4 u}{\dd x^4}-\beta u''+u^3-u=0, \ u:\R\to\R,
\end{equation}
which was proposed in 1988 by Dee and van Saarloos \cite{dee} as a higher-order model equation for bistable systems. Equation \eqref{efk} has been extensively studied by different methods: topological shooting methods, Hamiltonian methods, variational methods, and methods based on the maximum principle (cf. \cite{bon1}, \cite{pel1}, and the references therein). In these monographs, a systematic account is given of the different kinds of orbits obtained for O.D.E. \eqref{efk}, which has a considerably richer structure than second order phase transition models.

The existence of heteroclinic orbits of \eqref{efk} via variational arguments was investigated for the first time by L. A. Peletier, W. C. Troy
and R. C. A. M. VanderVorst \cite{pel2}, and W. D. Kalies, R. C. A. M. VanderVorst \cite{kal}. In the vector case $m\geq 1$, we established \cite{ps} the existence of minimal heteroclinics for a large class of fourth order systems, including the O.D.E.:
\begin{equation}\label{ode0}
\frac{\dd^4 u}{\dd x^4}(x)-\beta u''(x)+\nabla W(u(x)), \ u:\R\to\R^m\  (m\geq 1),\ \beta>0, \  x\in\R,
\end{equation}
with a double well potential $W$ as in \eqref{w13}. By definition, a heteroclinic orbit is a solution $e\in C^4(\R;\R^m)$ of \eqref{ode0} such that
$\lim_{x\to\pm\infty}(e(x),e'(x),e''(x),e'''(x))=(a^\pm,0,0,0)$ in the phase-space. A heteroclinic orbit is called \emph{minimal} if it is a minimizer of the Action functional associated to \eqref{ode0}:
\begin{equation}\label{JJ}
J_I(u):=\int_I\Big[\frac{1}{2}| u''(x)|^2+\frac{\beta}{2}|u'(x)|^2+W(u(x))\Big]\dd x, \ I\subset \R,
\end{equation}
in the class $A:=\{ u\in H_{\rm loc}^{2}(\R;\R^m):\ \lim_{x\to\pm\infty}u(x)=a^\pm\}$, i.e. if $J_\R(e)=\min_{u\in A} J_\R(u)=:J_{\mathrm{min}}$. Assuming \eqref{w13}, we know that there exists at least one \emph{minimal} heteroclinic orbit $e$ (cf. \cite{ps}). In addition, since the minima $a^\pm$ are nondegenerate, the convergence to the minima $a^\pm$ is exponential for every minimal heteroclinic $e$, i.e.
\begin{subequations}\label{expest}
\begin{equation}
|e(x)-a^-|+|e'(x)|+|e''(x)|+|e'''(x)|+|e''''(x)|\leq K e^{kx}, \forall x\leq 0,
\end{equation}
\begin{equation}
|e(x)-a^+|+|e'(x)|+|e''(x)|+|e'''(x)|+|e''''(x)|\leq K e^{-kx}, \forall x\geq 0,
\end{equation}\end{subequations}
where the constants $k,K>0$ depend on $e$ (cf. \cite[Proposition 3.4.]{ps}).
Clearly, if $x\mapsto e(x)$ is a heteroclinic orbit, then the maps
\begin{equation}\label{transl}
x\mapsto e^T(x):=e(x-T),  \forall T\in\R,
\end{equation}
obtained by translating $x$, are still heteroclinic orbits.

For the scalar O.D.E. \eqref{efk}, the uniqueness (up to translations) of the minimal heteroclinic is a very difficult open problem. On the other hand, in the vector case ($m\geq 2$) explicit examples of potentials having at least two minimal heteroclinics can be given. More precisely, Lemma \ref{lem2} provides the existence of potentials $W$ for which the set $F$ of minimal heteroclinics of \eqref{ode0} satisfies the separation condition
\begin{equation}\label{parti}
F=F^-\cup F^+, \text{ with }  F^-\neq \varnothing, \  F^+\neq \varnothing, \text{ and } d_{\mathrm{min}}:=d(F^-,F^+)>0,
\end{equation}
where $d$ stands for the distance in $L^2(\R;\R^m)$, and $d(F^-,F^+):=\inf\{ \|e^--e^+\|_{L^2(\R;\R^m)}: e^-\in F^-, e^+\in F^+\}$. 
Under this structural assumption, we are going to construct heteroclinic double layers for \eqref{system}, that is, solutions $u(t,x)$ such that
\begin{subequations}\label{layer2}
\begin{equation}\label{lay1}
 \lim_{t\to\pm\infty} d(u(t,\cdot),F^\pm)=0,
\end{equation}
\begin{equation}\label{lay2}
\forall t\in\R: \ \lim_{x\to\pm\infty}u(t,x)=a^\pm.
\end{equation}
\end{subequations}
The existence of double layered solutions for the system $\Delta u-\nabla W(u)=0$, goes back to the work of Alama, Bronsard and Gui \cite{abg}. Subsequently, Schatzman \cite{scha} managed to remove the symmetry assumption on $W$ considered in \cite{abg}. We also mention the work of Alessio \cite{alessio}, where the separation condition \eqref{parti} has first been introduced, and the new developments on these results presented in \cite{fusco,monteil}.

Our construction of heteroclinic double layers for \eqref{system} is inspired in the approach from
Functional Analysis used in the classical theory of evolution equations. This method has recently been applied in the elliptic context (cf. \cite{ps2}) to give an alternative proof of Schatzman's result \cite{scha}. The idea is to view a solution $\R^2\ni (t,x)\mapsto u(t,x)$ of a P.D.E. as a map $t\mapsto [U(t):x\mapsto [U(t)](x):=u(t,x)]$ taking its values in an appropriate space of functions, and reduce the initial P.D.E. to an O.D.E. problem for $U$. Indeed, the uniform in $t$ boundary conditions \eqref{lay2} suggest to set
a map
\begin{equation*}
\ee_0(x)=
\begin{cases}
a^-,&\text{ for } x\leq -1,\\
\frac{a^++a^-}{2}+(a^+ -a^-)\frac{3x-x^3}{4},&\text{ for }-1 \leq x\leq 1,\\
a^+,&\text{ for } x\geq 1.
\end{cases}
\end{equation*}
and work in the affine subspace $\mathcal H:=\ee_0+L^2(\R;\R^m)=\{u=\ee_0+h: h\in  L^2(\R;\R^m)\}$ which has the structure of a Hilbert space with the inner product
\begin{equation*}\label{innerp}
\langle u,v \rangle_{\mathcal H}=\langle (u-\ee_0),(v-\ee_0)\rangle_{L^2(\R;\R^m)}, \ \forall u,v\in \mathcal H.
\end{equation*}
We also denote by $\|\cdot\|_{\mathcal H}$ the norm in $\mathcal H$, and by $d(u,v):=\|u-v\|_{L^2(\R;\R^m)}$ the corresponding distance.
In view of \eqref{expest}, it is clear that $e\in \mathcal{ H}$, and $e',e''\in L^2(\R;\R^m)$, for every minimal heteroclinic $e\in F$.

Next, we reduce system \eqref{system} together with the boundary conditions \eqref{layer2}, to a variational problem for the orbit $U:\R\to \mathcal H$, $t\mapsto [U(t):x\mapsto [U(t)](x):=u(t,x)]$. We shall proceed in several steps. 
The idea is to split between the variables $x$ and $t$, the terms appearing in the energy functional 
\begin{equation}\label{EE}
E_\Omega(u):=\int_\Omega\Big[\frac{1}{2}|\nabla^2 u|^2+\frac{\beta}{2}|\nabla u|^2+W(u)\Big], \ u \in H^2(\Omega;\R^m),\ \Omega\subset \R^2,
\end{equation}
associated to \eqref{system}. By gathering the derivatives of $u$ with respect to $x$, and the potential term, we first define in $\mathcal H$, the \emph{effective} potential $\mathcal W:\mathcal H \to [0,+\infty]$ by
\begin{equation}
\mathcal W(u)=
\begin{cases}
J_\R(u)-J_{\mathrm{min}}, &\text{ when the distributional derivatives }  u',\, u'' \in L^2(\R;\R^m),\\
+\infty,&\text{ otherwise,}
\end{cases}
\end{equation}
where $J_{\mathrm{min}}=\min_{v\in A} J_\R(v)$. 
Note that $\mathcal W\geq 0$, since $u' \in L^2(\R;\R^m)$ implies that $\lim_{x\to\pm\infty}u(x)=a^\pm$ i.e. $u \in A$, and thus $J_\R(u)\geq J_{\mathrm{min}}$. It is also obvious that $\mathcal W$ only vanishes on the set $F$ of minimal heteroclinics. 

Subsequently, we define the constrained class\footnote{The method of constrained minimization to construct minimal heteroclinics for the system $u''-\nabla W(u)=0$, goes back to \cite{alikakos1}. We refer to \cite{kreuter}, \cite{pap}, \cite{caz} and \cite{brezis2}, for the general theory of Sobolev spaces of vector-valued functions.}
\[\mathcal{A}=\Big\{V\in H_{\rm loc}^{2}(\R;\mathcal H):\left.  \begin{array}{l} V(t)\in \mathcal F^-,\text{ for }t\leq t_V^-,\\
V(t)\in \mathcal F^+,\text{ for }t\geq t_V^+,\end{array}\right.\text{ for some }t_V^-<t_V^+\Big\},\]
where
$\mathcal F^-=\{v+h: v \in \mathcal H, \, h \in L^2(\R;\R^m),\, d(v,F^-)\leq d(v,F^+), \|h\|_{L^2(\R;\R^m)}\leq d_{\mathrm{min}}/4\}$ (resp. $\mathcal F^+=\{v+h: v\in \mathcal H,\,  h \in L^2(\R;\R^m),\,d(v,F^+)\leq d(v,F^-), \|h\|_{L^2(\R;\R^m)}\leq d_{\mathrm{min}}/4\}$) are neighbourhoods of $F^-$ (resp. $F^+$) in $\mathcal H$, and the numbers $t_V^-<t_V^+$ depend on $V$.

Finally, we define the Action functional in $\mathcal H$ by 

\begin{equation}\label{action}
\mathcal J_{\R}(V):=\int_\R \Big[\frac{1}{2}\|V''(t)\|^2_{L^2(\R;\R^m)}+\frac{\beta}{2}\|V'(t)\|^2_{L^2(\R;\R^m)}+\sigma (V'(t))+\mathcal W(V(t))\Big] \dd t ,
\end{equation}
where we have set for $h\in L^2(\R;\R^m)$:
\begin{equation}
\sigma(h)=
\begin{cases}
\int_\R |h_x(x)|^2 \dd x, &\text{ when the distributional derivative }  h_x\in L^2(\R;\R^m),\\
+\infty,&\text{ otherwise,}
\end{cases}
\end{equation}
One can see that the definitions of $\mathcal W$ and $\mathcal J$ are relevant, since on a strip $[t_1,t_2]\times \R$, the energy functional $E$ is equal to $\mathcal J$ up to constant. More precisely, if $u \in C^2(\R^2;\R^m)$ is such that $u(x_1,x_2) \equiv a^+$, $\forall x_1\geq l>0$, and $u(x_1,x_2) \equiv a^-$, $\forall x_1\leq -l$, then setting $[U(t)](x):=u(t,x)$, one obtains $E_{[t_1,t_2]\times \R}(u)=\mathcal J_{[t_1,t_2]}(U)+(t_2-t_1)J_{\mathrm{min}}$.

In the proof of Theorem \ref{connh2} below, we show the existence of a minimizer $U$ of $\mathcal J$ in the constrained class $\mathcal A$. This result follows from an argument first introduced in \cite[Lemma 2.4.]{ps}, and from the nice properties of the effective potential $\mathcal W$ and the set $F$ established in section \ref{sec:sec2}. Let us just mention that $\mathcal W:\mathcal H\to[0,+\infty]$ is sequentially weakly lower semicontinuous (cf. Lemma \ref{lem1w}), and that the sets $F^\pm$ intersected with closed balls are compact in $\mathcal H$.  Next, from the orbit $U:\R\to\mathcal H$, we recover a solution $u$ of \eqref{system}. On the one hand, the constraint imposed in the class $\mathcal A$, forces $U$ to behave asymptotically as in \eqref{lay1}. On the other hand, the second boundary condition \eqref{lay2} follows from the definition of the space $\mathcal H$. In addition, since $U$ is a minimizer, the double layered solution $u$ obtained is \emph{minimal}, in the sense that
\begin{equation}\label{mineq}
E_{\mathrm{supp}\, \phi}(u)\leq E_{\mathrm{supp}\, \phi}(u+\phi), \ \forall \phi\in C^2_0(\R^2;\R^m).
\end{equation}
This notion of minimality is standard for many problems in which the energy of a localized solution is actually infinite due to non compactness of the domain. Thus, Theorem \ref{connh2} provides the existence of two-dimensional minimal solutions for \eqref{system}, whenever the potential $W$ satisfies the separation condition \eqref{parti}.

In contrast with second order phase transition models, the development of the theory of fourth order phase transition models in the P.D.E. context is very recent. The second order Allen-Cahn equation $\Delta u=u^3-u$, $u:\R^n\to\R$, has been the subject of a tremendous amount of publications in the past 30 years, certainly motivated from several challenging conjectures raised by De Giorgi (cf. \cite{dgiorgi1} and \cite{fav1}). As far as fourth order P.D.E. of phase transition type are concerned, only a few aspects of this theory have been investigated. Let us mention: the $\Gamma$-convergence results obtained in \cite{fonseca,hil}, the saddle solution constructed in \cite{bon16}, and the one-dimensional symmetry results established in \cite{bon15}, where an analog of the De Giorgi conjecture is stated, and a Gibbon's type conjecture is proved.

The aim of our present work is to provide for equation \eqref{system}, the first examples of two-dimensional \emph{minimal solutions}, since these solutions play a crucial role in phase transition models, and are closely related to the De Giorgi conjecture (cf. \cite{savin} in the case of second order phase transition models). After these explanations, we give the complete statement of Theorem \ref{connh2}:

\begin{theorem}\label{connh2}
Assume the potential $W$ satisfies \eqref{w13} and \eqref{parti}. Then, there exists a minimal solution $u\in C^4(\R^2;\R^m)$ of \eqref{system} such that
	such that
			\begin{subequations}\label{layer}
			\begin{equation}\label{lay1baa}
			\lim_{t\to\pm\infty}d(u(t,\cdot),F^\pm)=0, \text{ and } \lim_{t\to \infty} u_t(t,\cdot)=0 \text{ in } L^2(\R;\R^m),
  		\end{equation}
			\begin{equation}\label{lay2baa}
			\lim_{x\to\pm\infty}u(t,x)=a^\pm \text{ uniformly when $t$ remains bounded}.  
			\end{equation}
		\end{subequations}			
\end{theorem}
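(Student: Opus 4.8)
The plan is to carry out the infinite-dimensional variational scheme sketched in the introduction: minimize the reduced Action $\mathcal J_\R$ over the constrained class $\mathcal A$, and then upgrade the minimizer to a classical solution of \eqref{system} with the stated asymptotics. First I would establish the preliminary functional-analytic facts — the sequential weak lower semicontinuity of $\mathcal W$ on $\mathcal H$ (Lemma \ref{lem1w}), the compactness of $F^\pm$ intersected with closed balls of $\mathcal H$, and the lower semicontinuity and convexity of $\sigma$ — so that the three non-kinetic terms in \eqref{action} behave well under weak convergence in $H^2_{\rm loc}(\R;\mathcal H)$, while the kinetic terms $\tfrac12\|V''\|^2$ and $\tfrac\beta2\|V'\|^2$ are handled by the usual convexity argument. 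Then I would run a direct-method argument on $\mathcal A$ following \cite[Lemma 2.4]{ps}: take a minimizing sequence $V_n$, use the uniform bound on $\mathcal J_\R(V_n)$ together with the constraint that $V_n(t)\in\mathcal F^\mp$ outside $[t_{V_n}^-,t_{V_n}^+]$ to control translations (fixing the ``transition interval'' by a normalization, e.g. requiring $d(V_n(0),F^-)=d(V_n(0),F^+)$ or an analogous centering), extract a weakly convergent subsequence in $H^2_{\rm loc}$, and pass to the limit to get a minimizer $U\in\mathcal A$ with $0<\mathcal J_\R(U)<\infty$. A point needing care here is that the minimizer must actually leave $\mathcal F^-$ and enter $\mathcal F^+$ (otherwise $U$ would be constant and the asymptotics \eqref{lay1baa} would fail); this is where the separation hypothesis \eqref{parti} with $d_{\mathrm{min}}>0$ enters, exactly as in Alessio's and Schatzman's arguments — the $d_{\mathrm{min}}/4$ in the definition of $\mathcal F^\pm$ prevents the two neighbourhoods from overlapping, so a nontrivial transition has strictly positive cost that is nonetheless finite (comparison with a competitor built from \eqref{transl} and $\ee_0$).

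Next I would derive the Euler--Lagrange equation. Since $U$ minimizes $\mathcal J_\R$ over $\mathcal A$ and the constraint is inactive on the open transition region $\{t:\ V(t)\notin\mathcal F^-\cup\mathcal F^+\}$ (and, by a standard argument, $U$ does not touch the boundary of the constraint except in the trivial tails, or one shows the constraint is not binding there either via the strict inequality in the definition of $\mathcal F^\pm$), $U$ is a free critical point of $\mathcal J_\R$ on all of $\R$ in the weak sense. Unwinding the definitions of $\mathcal W$ and $\sigma$, the weak Euler--Lagrange equation for $U$ is precisely the weak formulation of \eqref{system} for $u(t,x):=[U(t)](x)$ — this is the content of the identity $E_{[t_1,t_2]\times\R}(u)=\mathcal J_{[t_1,t_2]}(U)+(t_2-t_1)J_{\mathrm{min}}$ recorded in the excerpt, which shows the two energies differ by a constant on strips and hence have the same critical points. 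Elliptic regularity for the fourth-order operator $\Delta^2-\beta\Delta$ (bootstrapping in $L^p$ and Schauder spaces, using $W\in C^2$ and the a priori $L^\infty$ bound coming from \eqref{w3} together with the minimality inequality \eqref{mineq}) then gives $u\in C^4(\R^2;\R^m)$.

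Finally I would verify the boundary behaviour. Condition \eqref{lay2baa} is immediate from $u\in\mathcal H=\ee_0+L^2(\R;\R^m)$: for each fixed $t$, $[U(t)]-\ee_0\in L^2(\R;\R^m)$, and $\mathcal W(U(t))<\infty$ for a.e. $t$ forces $U(t)\in A$, hence $[U(t)](x)\to a^\pm$ as $x\to\pm\infty$; uniformity for bounded $t$ follows from a local energy bound and interpolation/Sobolev embedding in the strip, plus the exponential decay \eqref{expest}-type estimates near the nondegenerate minima $a^\pm$. Condition \eqref{lay1baa} is the delicate part: from $\int_\R\mathcal W(U(t))\,\dd t<\infty$ and $\int_\R\|U'(t)\|^2<\infty$ one gets $\mathcal W(U(t))\to 0$ and $\|U'(t)\|\to 0$ along sequences $t\to\pm\infty$; since $\mathcal W$ vanishes exactly on $F$ and $F^\pm\cap\{\text{closed ball}\}$ is compact, $d(U(t),F)\to 0$, and the constraint $U(t)\in\mathcal F^\mp$ for $|t|$ large together with $d_{\mathrm{min}}>0$ pins down which component, giving $d(U(t),F^\pm)\to0$; upgrading convergence along sequences to a genuine limit, and obtaining $u_t(t,\cdot)\to0$ in $L^2$, uses the energy identity (a monotonicity/Hamiltonian-type argument: $t\mapsto \tfrac12\|U''\|^2 + \cdots$ minus $\mathcal W$-type quantity is controlled) exactly as in \cite{abg,alessio,scha,ps2}. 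I expect the main obstacle to be precisely this last step — ruling out oscillation of $U(t)$ as $t\to\pm\infty$ within the (possibly large) set $F^\pm$ and proving the limit exists, since $F^\pm$ need not be a single orbit; this is handled by a clean-up/cut-off argument showing that if $U$ spent unbounded time away from $F^\pm$ or oscillated, one could strictly lower the Action, contradicting minimality.
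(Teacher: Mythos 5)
Your overall strategy matches the paper's: reduce the PDE to an ODE for $t\mapsto U(t)\in\mathcal H$, minimize $\mathcal J_\R$ over the constrained class $\mathcal A$ by the direct method, derive the weak Euler--Lagrange equation, and then upgrade by elliptic regularity and an energy-finiteness contradiction for the asymptotics. The main ingredients (weak l.s.c. of $\mathcal W$ and $\sigma$, weak closedness of $\mathcal F^\pm$, $H^1(\R;L^2)\hookrightarrow L^\infty(\R;L^2)$, energy cost of each crossing) are all correctly identified. However, there are two places where your sketch skips over genuine difficulties that the paper has to work to overcome.

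First, the compactness of the minimizing sequence requires translations in \emph{both} variables, and your proposal only addresses one. You mention a normalization in $t$ to pin down the transition interval, but you do not mention the $x$-translation operators $L^T$. The functional $\mathcal J_\R$ is invariant under $L^T$, so without recentering in $x$ the sequence $V_k(t_{2i_0-1}(k))\in\mathcal F^-$ could have its transition layer drift to $x=\pm\infty$ and weakly converge to a map far from $F$ (the constant $\ee_0$, say). The paper fixes this using Lemma \ref{propcon} (ii) to choose $T_k$ so that $L^{T_k}V_k(t_{2i_0-1}(k))$ stays in a fixed ball. Moreover, the $t$-normalization you describe informally hides a real combinatorial issue: a minimizing sequence may cross between $\mathcal F^-$ and $\mathcal F^+$ several times; the paper first proves the number of crossings $N_k$ is uniformly bounded (each crossing costs at least $\sqrt{2\beta\mathcal W_0}\,d_{\mathrm{min}}/4$) and then runs an inductive selection of a \emph{single} crossing whose neighbouring plateaus grow to infinity, so that the weak limit again belongs to $\mathcal A$. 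Without this, the weak limit may have $t_U^-=+\infty$ or $t_U^+=-\infty$, i.e.\ fall out of the class. Your worry that ``$U$ would be constant'' mischaracterizes the failure mode; every $V\in\mathcal A$ transitions, the danger is that the transition escapes to $t=\pm\infty$ in the limit.

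Second, passing from the Hilbert-space Euler--Lagrange equation to the weak PDE formulation is not the trivial ``unwinding of definitions'' you describe. One must first show that the map $h(t,x):=[U(t)-\ee_0](x)$ lies in $H^2_{\rm loc}(\R^2;\R^m)$ with the distributional derivatives $h_t,h_{tt},h_{tx}$ in $L^2(\R^2)$ and $h_x,h_{xx}$ in $L^2$ on strips; the paper devotes Lemma \ref{sobolev} to this, using difference quotients and the finiteness of $\int_\R\sigma(U'(t))\dd t$ and $\int_\R\mathcal W(U(t))\dd t$. Regarding \eqref{lay1baa}, the paper's argument is more elementary than the Hamiltonian/monotonicity identity you propose: the $1/2$-Hölder continuity $\|V(t_2)-V(t_1)\|_{L^2}\leq M|t_2-t_1|^{1/2}$ plus Lemma \ref{lem1w}(ii) (namely $d(u,F)\geq c_1\Rightarrow\mathcal W(u)\geq c_2$) yields a contradiction with $\int_\R\mathcal W(U(t))\dd t<\infty$ if $d(U(t_k),F)\not\to 0$, and the constraint forces the correct component $F^\pm$; likewise $\|U'(t)\|\to 0$ is immediate from $U'\in H^1(\R;L^2(\R;\R^m))\hookrightarrow C_0(\R;L^2)$. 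Your Hamiltonian route is not wrong in spirit, but it is unnecessary and harder to make rigorous here since the fourth-order Hamiltonian identity in the $t$-variable is less tractable than the direct energy estimate.
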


For the sake of simplicity we only focused in this paper on the extended Fisher-Kolmogorov equation, since it is a well-known fourth order phase transition model. However, the proof of Theorem \ref{connh2} can easily be adjusted to provide the existence of heteroclinic double layers for a larger class of P.D.E. than \eqref{system}. It trivially extends to operators such as $a_{11}u_{tttt}+a_{12}u_{ttxx}+a_{22}u_{xxxx}-b_1 u_{tt}-b_2u_{xx}$ (instead of $\Delta^2u-\beta \Delta u$), where $a_{ij}$, $b_i$ are positive constants.
We also point out that by dropping the term $\sigma (V'(t))$ appearing in the definition of $\mathcal J$, we still obtain a minimizer $U$ in the class $\mathcal A$, and thus a weak solution $u$ of
\begin{equation}\label{newpde}
\frac{\partial^4u(t,x)}{\partial t^4}+
\frac{\partial^4u(t,x)}{\partial x^4} -\beta \Delta u(t,x)+ \nabla W(u(t,x))=0, \ u:\R^2\to\R^m \ m\geq 1, \ \beta> 0, \ (t,x)\in\R^2,
\end{equation}
satisfying \eqref{lay1baa}. Finally, instead of the space $\mathcal H=\ee_0+L^2(\R;\R^m)$, other Hilbert spaces may be considered in the applications of Theorem \ref{connh2}. Indeed, since the properties of the effective potential $\mathcal W $ and the sets $F^\pm$ (established in section \ref{sec:sec2}) hold for the $H^2$ norm, we may construct an heteroclinic orbit $\tilde U$ connecting $F^-$ and $F^+$, either in $\ee_0+H^1(\R;\R^m)$ or $\ee_0+H^2(\R;\R^m)$. 
Then, we recover from each of these orbits, a weak solution of a sixth (resp. eighth) order P.D.E. satisfying \eqref{lay1baa}. We refer to \cite[section 5]{ps2} for a similar construction in the space $\ee_0+H^1(\R;\R^m)$, and for the adjustments to make in the proof of these results.

\section{Properties of the effective potential $\mathcal W$ and the sets $F^\pm$, $\mathcal F^\pm$}\label{sec:sec2}

Assuming that \eqref{w13} holds, we establish in Lemma \ref{lem1w} below some properties of the functions $\mathcal W$ and $\sigma$ defined in the previous section. We first recall two lemmas from \cite{ps}.

\begin{lemma}\label{lem1}\cite[Lemma 2.2.]{ps}
Let $A_b=\{ u\in A:\ J_{\R}(u)\leq J_0\}$, for some constant $J_0\geq J_{\mathrm{min}}$. Then, the maps $u\in \mathcal A_b$ as well as their first derivatives are uniformly bounded and equicontinuous.
\end{lemma}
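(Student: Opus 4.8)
The plan is to derive every claim from the energy bound $J_\R(u)\le J_0$ together with one--dimensional Sobolev estimates, with hypothesis \eqref{w3} entering only to control $u$ itself. First, since $W\ge 0$ by \eqref{w1}, the bound $J_\R(u)\le J_0$ gives at once
\[
\tfrac12\|u''\|_{L^2(\R;\R^m)}^2+\tfrac{\beta}{2}\|u'\|_{L^2(\R;\R^m)}^2\le J_0,
\]
so that $\|u'\|_{L^2}$ and $\|u''\|_{L^2}$ are bounded by constants depending only on $J_0$ and $\beta$. Hence $u'\in H^1(\R;\R^m)$ with controlled norm, and the interpolation inequality $\|f\|_{L^\infty}^2\le 2\|f\|_{L^2}\|f'\|_{L^2}$ applied to $f=u'$ yields $\|u'\|_{L^\infty}\le M(J_0,\beta)$. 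Combined with $|u'(x)-u'(y)|\le\|u''\|_{L^2}|x-y|^{1/2}$ and $|u(x)-u(y)|\le\|u'\|_{L^\infty}|x-y|$, this shows that the family $\{u':u\in A_b\}$ is uniformly bounded and uniformly $1/2$--H\"older, and that $\{u:u\in A_b\}$ is uniformly Lipschitz; in particular both families are equicontinuous. It then remains only to bound $\|u\|_{L^\infty}$ uniformly over $A_b$.

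For that bound I would invoke \eqref{w3}: fix $\rho>\max(|a^-|,|a^+|)$ and $\delta>0$ such that $W(v)\ge\delta$ whenever $|v|\ge\rho$. If $u\in A_b$ and $|u(x_0)|=R>\rho$ for some $x_0$, then, since $u(x)\to a^+$ as $x\to+\infty$ and $|a^+|<\rho$, the number $\gamma:=\inf\{x>x_0:|u(x)|\le\rho\}$ is finite and $>x_0$, with $|u|\ge\rho$ on $[x_0,\gamma]$ and $|u(\gamma)|=\rho$. On $[x_0,\gamma]$ we have $W(u)\ge\delta$, hence $\delta(\gamma-x_0)\le\int_{x_0}^{\gamma}W(u(x))\,\dd x\le J_0$, i.e. $\gamma-x_0\le J_0/\delta$. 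Combining this with the $L^2$ bound on $u'$,
\[
R-\rho\le|u(x_0)-u(\gamma)|\le\int_{x_0}^{\gamma}|u'(x)|\,\dd x\le\|u'\|_{L^2}\,(\gamma-x_0)^{1/2}\le\Big(\tfrac{2J_0}{\beta}\Big)^{1/2}\Big(\tfrac{J_0}{\delta}\Big)^{1/2},
\]
so $R$, and therefore $\|u\|_{L^\infty}$, is bounded by a constant depending only on $J_0$, $\beta$ and $W$.

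The main obstacle is precisely this last step. The action \eqref{JJ} controls $u$ only through the term $W(u)$, which vanishes at the wells $a^\pm$, so it gives no direct $L^p$ bound on $u$; the uniform pointwise bound has to be obtained indirectly, by playing the $L^2$--smallness of $u'$ against the fact, supplied by \eqref{w3}, that $W$ stays bounded below outside a ball. The length estimate $\gamma-x_0\le J_0/\delta$ is the mechanism that prevents a finite--action orbit from making a long excursion away from the minima. Everything else is a soft consequence of the energy bound and of the embedding $H^1(\R)\hookrightarrow C^{0,1/2}_b(\R)$.
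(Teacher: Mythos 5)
Your proof is correct and is essentially the standard argument behind the cited result \cite[Lemma 2.2]{ps} (the paper itself only quotes the lemma without reproducing a proof): the energy bound gives $L^2$ control of $u'$ and $u''$, one-dimensional interpolation and the fundamental theorem of calculus give uniform bounds and H\"older/Lipschitz equicontinuity, and the excursion-length estimate via \eqref{w3} supplies the missing $L^\infty$ bound on $u$. No gaps.
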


\begin{lemma}\label{lem1new}\cite[Proof of Lemma 2.4.]{ps}
Given a sequence $\{u_k\}\in \mathcal H$, such that $\lim_{k\to\infty}\mathcal W(u_k)=0$, there exist a sequence $\{x_k\}\subset\R$, and a minimal heteroclinic $e\in F$, such that up to subsequence the maps $\bar u_k(x):=u_k(x-x_k)$ converge to $e$ in $C^1_{\mathrm{loc}}(\R;\R^m)$, as $k\to\infty$.
\end{lemma}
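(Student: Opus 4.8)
The hypothesis $\lim_{k\to\infty}\mathcal W(u_k)=0$ says exactly that, for all large $k$, the distributional derivatives $u_k',u_k''$ lie in $L^2(\R;\R^m)$ (hence $u_k\in A$, as recalled above) and $J_\R(u_k)\to J_{\mathrm{min}}$; so, discarding finitely many terms, $\{u_k\}$ is a minimizing sequence for $J_\R$ in $A$ with $J_\R(u_k)\le J_0:=J_{\mathrm{min}}+1$. The plan is to run the direct method with a translation-recentering --- precisely the scheme behind the existence of minimal heteroclinics in \cite{ps}. I would first fix $\mu$ with $0<\mu<\tfrac12|a^+-a^-|$ and recenter: since $u_k$ is continuous with $u_k(-\infty)=a^-$, $u_k(+\infty)=a^+$, the set $\{x:\ |u_k(x)-a^-|\ge\mu\}$ is closed, nonempty and bounded below; letting $y_k$ be its infimum one has $|u_k(y_k)-a^-|=\mu$ and $|u_k(x)-a^-|<\mu$ for $x<y_k$. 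Setting $x_k:=-y_k$, the recentred maps $\bar u_k(x):=u_k(x-x_k)$ satisfy $|\bar u_k(0)-a^-|=\mu$ and $|\bar u_k(x)-a^-|<\mu$ for every $x<0$.

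Next, since $J_\R$ is translation invariant, $\bar u_k\in A$ with $J_\R(\bar u_k)\le J_0$, so by Lemma \ref{lem1} the families $\{\bar u_k\}$ and $\{\bar u_k'\}$ are uniformly bounded and equicontinuous; by Arzel\`a--Ascoli and a diagonal extraction over the intervals $[-N,N]$, a subsequence (not relabelled) converges in $C^1_{\mathrm{loc}}(\R;\R^m)$ to some $e\in C^1(\R;\R^m)$ --- which is already the asserted mode of convergence. I would then control the action of $e$: from $\|\bar u_k''\|_{L^2(\R)}^2\le 2J_0$ one extracts (a further subsequence) $\bar u_k''\rightharpoonup e''$ weakly in $L^2_{\mathrm{loc}}$, and for every bounded interval $I$, combining the weak lower semicontinuity of $\|\cdot\|_{L^2(I)}$ with the uniform convergence of $\bar u_k$ and $\bar u_k'$ on $I$ gives $J_I(e)\le\liminf_k J_I(\bar u_k)\le J_{\mathrm{min}}$; letting $I\uparrow\R$ (the integrand being nonnegative), $J_\R(e)\le J_{\mathrm{min}}$, so in particular $e',e''\in L^2(\R;\R^m)$ and $W(e)\in L^1(\R)$.

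It remains to see that $e$ is a minimal heteroclinic. Since $\int_\R W(e)<\infty$, $e$ is bounded (by \eqref{w3}), and $e$ is uniformly continuous (its derivative is bounded, as the uniform bound on $\bar u_k'$ passes to the limit), one has $\min\{|e(x)-a^-|,|e(x)-a^+|\}\to0$ as $|x|\to\infty$; by continuity $e$ has a limit in $\{a^-,a^+\}$ at each end, and the bound $|e(x)-a^-|\le\mu<\tfrac12|a^+-a^-|$ for $x\le0$ forces $e(-\infty)=a^-$. The main obstacle is now to exclude $e(+\infty)=a^-$, i.e. to rule out loss of compactness at $+\infty$ (the standard concentration--compactness dichotomy): a priori the transition carried by $\bar u_k$ could slide off to $+\infty$ while the recentred limit performs only a partial excursion near $a^-$. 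To exclude this I would split the energy: if $e(+\infty)=a^-$, pick $R_n\uparrow\infty$ with $e(R_n)\to a^-$ and $e'(R_n),e''(R_n)\to0$ (possible because $e',e''\in L^2$), and glue $\bar u_k$ restricted to $[R_n,\infty)$ to the constant $a^-$ by a cubic Hermite interpolant on $[R_n-1,R_n]$ matching value and first derivative at $R_n$; this yields $w_{n,k}\in A$, whence $J_{[R_n,\infty)}(\bar u_k)=J_\R(w_{n,k})-J_{[R_n-1,R_n]}(w_{n,k})\ge J_{\mathrm{min}}-\eta(n,k)$, where $\eta(n,k)\to0$ first as $k\to\infty$ (by the $C^1_{\mathrm{loc}}$ convergence) and then as $n\to\infty$. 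Combining $J_\R(\bar u_k)\ge J_{[-R_n,R_n]}(\bar u_k)+J_{[R_n,\infty)}(\bar u_k)$ with the lower semicontinuity of the previous paragraph, I get $J_{\mathrm{min}}\ge J_{[-R_n,R_n]}(e)+J_{\mathrm{min}}-o_n(1)$, hence $J_\R(e)=0$ and $e$ is constant --- contradicting $|e(0)-a^-|=\mu>0$. Therefore $e(+\infty)=a^+$, so $e\in A$; together with $J_\R(e)\le J_{\mathrm{min}}$ and (since $e\in A$) $J_\R(e)\ge J_{\mathrm{min}}$, $e$ minimizes $J_\R$ in $A$, hence --- after the standard elliptic-regularity bootstrap, which upgrades a minimizer to a $C^4$ solution with the exponential decay \eqref{expest} and phase-space limits $(a^\pm,0,0,0)$ (cf. \cite{ps}) --- $e$ is a minimal heteroclinic, i.e. $e\in F$. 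The part requiring genuine work is the energy-splitting step: it quantifies that completing the transition to $a^+$ far to the right costs at least $J_{\mathrm{min}}-o(1)$, which is incompatible with $J_\R(\bar u_k)\to J_{\mathrm{min}}$ unless the recentred limit is already a full minimal heteroclinic.
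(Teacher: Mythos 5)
Your proof is correct. The paper itself does not reproduce an argument for this lemma but simply cites \cite[Proof of Lemma 2.4]{ps}; the route you take --- recentring at the first exit of a $\mu$-ball about $a^-$, Arzel\`a--Ascoli via Lemma~\ref{lem1} to get $C^1_{\mathrm{loc}}$ convergence, weak $L^2$ lower semicontinuity on compacts to obtain $J_\R(e)\le J_{\mathrm{min}}$, and the cut-and-glue energy-splitting step to rule out loss of compactness at $+\infty$ --- is exactly the standard direct-method scheme for minimal heteroclinics of \eqref{ode0} and matches the argument of \cite{ps}.
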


\begin{lemma}\label{lem1w}
\begin{itemize}
\item[(i)] The functions $\mathcal W:\mathcal H\to[0,+\infty]$ and $\sigma:L^2(\R;\R^m)\to[0,+\infty]$ are sequentially weakly lower semicontinuous.
\item[(ii)] Let $\{u_k\}\subset \mathcal H$ be such that $\lim_{k\to\infty}\mathcal W(u_k)=0$.
Then, there exist a sequence $\{x_k\}\subset\R$, and $e\in F$, such that (up to subsequence) the maps $\bar u_k(x):=u_k(x-x_k)$ satisfy $\lim_{k\to\infty}\|\bar u_k-e\|_{H^{2}(\R;\R^m)}=0$.
As a consequence, $d(u,F)\to 0$, as $\mathcal W(u)\to0$, and for every $c_1>0$, there exists $c_2>0$ such that $d(u,F)\geq c_1 \Rightarrow \mathcal W(u)\geq c_2$.
\end{itemize}
\end{lemma}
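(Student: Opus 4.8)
The plan is to prove (i) and (ii) separately, using Lemma \ref{lem1} and Lemma \ref{lem1new} for (ii). For part (i), I would first treat $\sigma$: if $h_k \rightharpoonup h$ weakly in $L^2(\R;\R^m)$ and $\liminf_k \sigma(h_k) < \infty$, then along a subsequence realizing the $\liminf$, the distributional derivatives $(h_k)_x$ are bounded in $L^2$, hence (after a further subsequence) converge weakly to some $g\in L^2$; testing against $C_0^\infty$ functions identifies $g = h_x$, and weak lower semicontinuity of the $L^2$-norm gives $\sigma(h) = \|h_x\|_{L^2}^2 \le \liminf_k \|(h_k)_x\|_{L^2}^2 = \liminf_k \sigma(h_k)$. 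The argument for $\mathcal W$ is analogous but with more terms: suppose $u_k \rightharpoonup u$ in $\mathcal H$ (equivalently $u_k - \ee_0 \rightharpoonup u - \ee_0$ in $L^2$) and $\liminf_k \mathcal W(u_k) < \infty$. Along a subsequence attaining the $\liminf$, the quantity $J_\R(u_k)$ is bounded, so $u_k', u_k''$ are bounded in $L^2(\R;\R^m)$; extracting weak limits and identifying them as $u', u''$ as before, I get $u_k \rightharpoonup u$ in $H^2_{\mathrm{loc}}$. The two quadratic terms $\frac12\|u''\|^2$ and $\frac\beta2\|u'\|^2$ are weakly lower semicontinuous, and for the potential term $\int_\R W(u_k)$ I would use that by Lemma \ref{lem1} (applied on the bounded-energy set) the $u_k$ are uniformly bounded and equicontinuous, hence converge uniformly on compacts to $u$, so by Fatou combined with local uniform convergence $\int_\R W(u)\,\dd x \le \liminf_k \int_\R W(u_k)\,\dd x$. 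Summing these three inequalities and subtracting $J_{\mathrm{min}}$ yields $\mathcal W(u) \le \liminf_k \mathcal W(u_k)$.

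For part (ii), the starting point is Lemma \ref{lem1new}: given $\mathcal W(u_k) \to 0$, there are translations $x_k$ and a minimal heteroclinic $e \in F$ with $\bar u_k(x) := u_k(x - x_k) \to e$ in $C^1_{\mathrm{loc}}(\R;\R^m)$ (up to subsequence). I need to upgrade this to $H^2$-norm convergence on all of $\R$. Since $\mathcal W(\bar u_k) = \mathcal W(u_k) \to 0$ (translation-invariance of $J_\R$), we have $J_\R(\bar u_k) \to J_{\mathrm{min}} = J_\R(e)$. Writing out $J_\R(\bar u_k) = \int_\R [\frac12|\bar u_k''|^2 + \frac\beta2|\bar u_k'|^2 + W(\bar u_k)]$, each of the three integrands is nonnegative; by the weak-lower-semicontinuity type argument and $C^1_{\mathrm{loc}}$ (plus Fatou for $W$), each term has $\liminf$ at least the corresponding term of $J_\R(e)$, but the total converges to $J_\R(e)$, forcing each term to converge to its limit: $\|\bar u_k''\|_{L^2}^2 \to \|e''\|_{L^2}^2$, $\|\bar u_k'\|_{L^2}^2 \to \|e'\|_{L^2}^2$, and $\int_\R W(\bar u_k) \to \int_\R W(e)$. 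Combined with weak convergence $\bar u_k' \rightharpoonup e'$, $\bar u_k'' \rightharpoonup e''$ in $L^2$ (obtained as in part (i)), convergence of the norms gives strong $L^2$ convergence of the first and second derivatives. It remains to show $\|\bar u_k - e\|_{L^2} \to 0$: here I would use the nondegeneracy \eqref{w2} and the exponential estimates \eqref{expest} for $e$ to control the tails uniformly — outside a large interval $[-R,R]$, both $\bar u_k$ and $e$ are trapped near $a^\pm$ (using that small $\int W$ forces $\bar u_k$ close to the wells, by \eqref{w3} to rule out large values and \eqref{w2} near the wells), so $\int_{|x|>R}|\bar u_k - e|^2$ is small uniformly in $k$, while on $[-R,R]$ the $C^1_{\mathrm{loc}}$ convergence handles the integral. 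Putting the three pieces together gives $\|\bar u_k - e\|_{H^2} \to 0$.

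The two consequences follow quickly. For ``$d(u,F) \to 0$ as $\mathcal W(u) \to 0$'': if not, there is $\varepsilon > 0$ and a sequence $u_k$ with $\mathcal W(u_k) \to 0$ but $d(u_k, F) \ge \varepsilon$; but $d$ is translation-invariant ($d(u_k,F) = d(\bar u_k, F) \le \|\bar u_k - e\|_{L^2} \le \|\bar u_k - e\|_{H^2} \to 0$), a contradiction. The last implication ``$d(u,F)\ge c_1 \Rightarrow \mathcal W(u) \ge c_2$'' is the contrapositive packaged quantitatively: if it failed for some $c_1$, there would be $u_k$ with $d(u_k,F) \ge c_1$ and $\mathcal W(u_k) \to 0$, contradicting what we just proved.

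I expect the main obstacle to be the tail estimate in (ii): showing that smallness of $\mathcal W(u_k)$ gives uniform (in $k$) control of $\int_{|x|>R} |\bar u_k - e|^2$. The subtlety is that $C^1_{\mathrm{loc}}$ convergence says nothing about the tails, so one genuinely needs the structural assumptions — using \eqref{w3} to confine $\bar u_k$ to a bounded region, then \eqref{w2} to get quadratic control of $W$ near the wells, then a Gronwall/ODE-type argument (as in \cite[Proposition 3.4.]{ps}) to get uniform exponential decay of $\bar u_k$ toward $a^\pm$ in the tails, matching the decay of $e$ from \eqref{expest}. Everything else is a fairly standard lower-semicontinuity-plus-convergence-of-norms argument.
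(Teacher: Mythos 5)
Part (i) is essentially the paper's argument (same use of weak compactness of the derivatives, weak lower semicontinuity of the $L^2$-norm, and Fatou for the $W$-term; treating $\sigma$ before or after $\mathcal W$ is immaterial) and is fine. Your route for the derivative convergence in (ii) — pass to the limit $J_\R(\bar u_k)\to J_{\min}=J_\R(e)$, note each of the three nonnegative terms has $\liminf$ at least its counterpart for $e$, conclude convergence of each, then combine with the weak convergences $\bar u_k'\rightharpoonup e'$, $\bar u_k''\rightharpoonup e''$ to get strong $L^2(\R)$ convergence of the derivatives — is correct and a genuine alternative to what the paper does. The gap is entirely in the tail estimate for $\|\bar u_k-e\|_{L^2(\R)}$, which you flag as the hard step but then propose the wrong mechanism for.

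You want to confine $\bar u_k$ via \eqref{w3} and then run a Gronwall/ODE argument (as in \cite[Prop.\ 3.4]{ps}) to get uniform exponential decay. Neither step works as stated. First, \eqref{w3} only keeps $\bar u_k$ in a fixed bounded set; to trap $\bar u_k$ in $\{|u-a^\pm|\le r\}$ on the tails you need an action barrier of the type \eqref{bound5}: any $H^2$ trajectory starting within $\epsilon$ of $a^\pm$ and reaching the sphere of radius $r$ costs at least $\sqrt{\beta c}(r/2)^2$. Combined with a comparison-map construction (as in \eqref{comparison3}--\eqref{bound4}, giving $J_{\R\setminus[\lambda^-,\lambda^+]}(\bar u_k)<\sqrt{\beta c}(r/2)^2$ once $\mathcal W(\bar u_k)$ is small), this traps $\bar u_k$ in the $r$-ball where the convexity \eqref{convex2} applies. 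Second, and more fundamentally, a Gronwall argument for exponential decay applies to \emph{solutions} of the ODE; the $\bar u_k$ are not solutions and need not decay exponentially (a vanishing-amplitude bump placed far out keeps $\mathcal W(\bar u_k)$ small with no exponential decay). What actually gives the $L^2$ tail bound is the identity \eqref{formula}, valid because $e$ solves the ODE:
\[
\mathcal W(u)=\int_\R\Big[\tfrac12|u''-e''|^2+\tfrac\beta2|u'-e'|^2+\big(W(u)-W(e)-\nabla W(e)\cdot(u-e)\big)\Big].
\]
The first two summands are nonnegative everywhere, and once $\bar u_k$ is trapped in the $r$-ball on $\R\setminus[\lambda^-,\lambda^+]$ the third is $\geq\tfrac c2|\bar u_k-e|^2$ there by \eqref{convex2}; since the part of the third integral over $[\lambda^-,\lambda^+]$ is $o(1)$ by $C^1_{\rm loc}$ convergence, $\mathcal W(\bar u_k)\to0$ then gives $\|\bar u_k-e\|_{L^2(\R\setminus[\lambda^-,\lambda^+])}\to0$ directly — no decay rate needed — and in the same stroke re-derives the derivative convergence. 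Replacing the Gronwall step by \eqref{formula} together with the barrier estimate \eqref{bound5} closes the gap.
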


\begin{proof}
(i) Let $\{u_k\}\subset \mathcal H$ be such that $u_k \rightharpoonup u$ in $\mathcal H$ (i.e.
$u_k -u\rightharpoonup 0$ in $L^2(\R;\R^m)$), and let us assume that
$l=\liminf_{k\to \infty}\mathcal W(u_k)<\infty$ (since otherwise the statement is trivial).
By extracting a subsequence we may assume that $\lim_{k\to \infty}\mathcal W(u_k)=l$. Next, in view of Lemma \ref{lem1}, we can apply to the sequence $\{u_k\}$ the theorem of Ascoli, to deduce that $u_k \to  u$ in $C^1_{\mathrm{loc}}(\R;\R^m)$, as $k\to\infty$ (up to subsequence). On the other hand, since $\|u''_k\|_{L^2(\R;\R^m)}$ (resp. $\|u'_k\|_{L^2(\R;\R^m)}$) is bounded, we have that $u''_k \rightharpoonup v_2$, (resp. $u'_k \rightharpoonup v_1$) in $L^2(\R;\R^m)$ (up to subsequence). In addition, one can easily see that  $u\in H_{\rm loc}^{2}(\R;\R^m)$, and $u'=v_1$ as well as $u''=v_2$. Finally, by the weakly lower semicontinuity of the $L^2(\R;\R^m)$ norm we obtain $\|u''\|^2_{L^2(\R;\R^m)}\leq\liminf_{k\to\infty}\|u''_k\|^2_{L^2(\R;\R^m)}$ (resp.  $\|u'\|^2_{L^2(\R;\R^m)}\leq\liminf_{k\to\infty}\|u'_k\|^2_{L^2(\R;\R^m)}$, while by Fatou's Lemma we get $\int_\R W(u)\leq \liminf_{k\to\infty}\int_\R W(u_k)$. Gathering the previous results, we conclude that $\mathcal W(u)\leq l$ i.e.
$\mathcal W(u)\leq \liminf_{k\to \infty}\mathcal W(u_k)$. To show the sequentially weakly lower semicontinuity of $\sigma$, we proceed in a similar way. Let $\{h_k\}\subset L^2(\R;\R^m)$ be such that $h_k \rightharpoonup h$ in $L^2(\R;\R^m)$, and $\lim_{k\to \infty}\sigma(h_k)=l<\infty$. Since $\|h'_k\|_{L^2(\R;\R^m)}$ is bounded, we deduce that (up to subsequence) $h'_k \rightharpoonup g$ in $L^2(\R;\R^m)$ for some $g\in L^2(\R;\R^m)$, such that $\|g\|^2_{L^2(\R;\R^m)}\leq\liminf_{k\to\infty}\|h'_k\|^2_{L^2(\R;\R^m)}=l$. Clearly, we have
$h'=g$. Thus, we conclude that $\sigma(h)\leq l$ i.e. $\sigma(h)\leq \liminf_{k\to \infty}\sigma(h_k)$.

(ii) We first establish that given $u\in\mathcal H$ such that $u' ,u''\in L^2(\R;\R^m)$, and $e \in F$, we have
\begin{equation}\label{formula}
\mathcal W(u)=\int_\R \Big[\frac{1}{2}|u''-e''|^2+\frac{\beta}{2}|u'-e'|^2  +W(u)-W(e)-\nabla W(e)\cdot (u-e)\Big].
\end{equation}
In view of \eqref{expest}, it is clear that $e'$, $e''$, $e'''$, $e''''$ as well as $\nabla W(e)$ belong to $L^2(\R;\R^m)$. As a consequence, we can see that 
\begin{equation}\label{addition}
\int_\R [e''\cdot(u''-e'')+ \beta e'\cdot(u'-e')+\nabla W(e)\cdot (u-e)]=\int_\R [e''''-\beta e'' +\nabla W(e)]\cdot (u-e)=0.
\end{equation}
Finally, by substracting \eqref{addition} from $\mathcal W(u)=\int_\R \big[\frac{1}{2}|u''|^2-\frac{1}{2}|e''|^2+\frac{\beta}{2}|u'|^2-\frac{\beta}{2}|e'|^2 +W(u)-W(e)\big]$, formula \eqref{formula} follows.

Now, we consider a sequence $\{u_k\}\subset \mathcal H$ such that $\lim_{k\to\infty}\mathcal W(u_k)=0$. According to Lemma \ref{lem1new}, there exist a sequence $\{x_k\}\subset\R$, and $e\in F$, such that (up to subsequence) the maps $\bar u_k(x):=u_k(x-x_k)$ satisfy
\begin{equation}\label{propaa}
 \lim_{k\to\infty}\bar u_k(x)= e(x), \text{ in $C^1_{\mathrm{loc}}(\R;\R^m)$}.
\end{equation}
Our claim is that
\begin{equation}\label{claime}
\lim_{k\to\infty}\|\bar u_k-e\|_{H^{2}(\R;\R^m)}=0.
\end{equation}
According to hypothesis \eqref{w2} we have
\begin{subequations}
\begin{equation}\label{convex1}
	W(u)\geq\frac{c}{2}|u-a^\pm|^2 , \forall u: |u-a^\pm|\leq r,
	\end{equation}
\begin{equation}\label{convex2}
	W(v)-W(u)-\nabla W(u)\cdot (v-u)\geq \frac{c}{2}|v-u|^2, \forall u,v: |u-a^\pm|\leq r, \ |v-a^\pm|\leq r.
	\end{equation}
\end{subequations}
Let $\mu>0$ be such that
\begin{equation}\label{convex3}
	W(u)\leq\frac{\mu}{2}|u-a^\pm|^2 , \forall u\in\R^m: |u-a^\pm|\leq r,
	\end{equation}
and let $\epsilon\in (0,r/2)$. Given $v\in B:=\{v\in H^{2}([\lambda^-,\lambda^+];\R^m), |v(\lambda^\pm)-a^\pm|\leq \epsilon/8, |v'(\lambda^\pm)|\leq\epsilon/4\}$, we set $\phi(\lambda^\pm):=v(\lambda^\pm)-\frac{1}{2}v'(\lambda^\pm)$,
and define the comparison map
\begin{equation}\label{comparison3}
z(x)=\begin{cases}
v(\lambda^-)+\big((x-\lambda^-)+\frac{(x-\lambda^-)^2}{2}\big)v'(\lambda^-) &\text{ for } \lambda^--1\leq x \leq \lambda^-,\\
\phi(\lambda^-)+(2(x-\lambda^-+1)^2-(x-\lambda^-+1)^4)(a^--\phi(\lambda^-)) &\text{ for } \lambda^--2\leq x \leq \lambda^--1,\\
a^- &\text{ for }  x \leq \lambda^--2.
\end{cases}\end{equation}
An easy computation shows that 
\begin{itemize}
\item $z(\lambda^-)=v(\lambda^-)$, $z'(\lambda^-)=v'(\lambda^-)$, $z\in H^{2}_{\mathrm{loc}}((-\infty,\lambda^-];\R^m)$,
\item $\forall x\leq \lambda^-$: $|z(x)-a^-|\leq \epsilon/4$, $|z'(x)|\leq \epsilon$, $|z''(x)|\leq 2\epsilon$,
\item $J_{(-\infty,\lambda^-]}(z)\leq (4+\beta+\mu)\epsilon^2$.
\end{itemize} 
Clearly, by reproducing the same argument in the interval $[\lambda^+,\infty)$, we can find a comparison map $z\in H^{2}_{\mathrm{loc}}([\lambda^+,\infty);\R^m)$ such that $z(\lambda^+)=v(\lambda^+)$, $z'(\lambda^+)=v'(\lambda^+)$, and
$J_{[\lambda^+,\infty)}(z)\leq (4+\beta+\mu)\epsilon^2$. As a consequence, we obtain 
\begin{equation}\label{bound4}
\inf\{J_{[\lambda^-,\lambda^+]}(v): v\in B\}\geq J_{\mathrm{min}}-2(4+\beta+\mu)\epsilon^2,
\end{equation}
since otherwise we can construct a map in $A$ whose action is less than $J_{\mathrm{min}}$. On the other hand we have
\begin{equation}\label{bound5}
	\inf\{J_{[x_1,x_2]}(v): v\in H^{2}([x_1,x_2];\R^m), |v(x_1)-a^\pm|\leq \epsilon, |v(x_2)-a^\pm|= r\}\geq \sqrt{\beta c}(r/2)^2.
	\end{equation}
Indeed, for such a map $v$, there exists an interval $[\tilde x_1,\tilde x_2]\subset [x_1,x_2]$, such that $|v(\tilde x_1)-a^\pm|=r/2$, $|v(\tilde x_2)-a^\pm|=r$, and $|v(x)|\in [r/2,r]$, $\forall x \in[\tilde x_1,\tilde x_2]$, thus we can check that 
$$J_{[x_1,x_2]}(v)\geq J_{[\tilde x_1,\tilde x_2]}(v)\geq \int_{\tilde x_1}^{\tilde x_2} \sqrt{2\beta W(v)}|v'|\geq \sqrt{\beta c}(r/2)^2.$$
In the sequel, we fix an $\epsilon\in(0,r/2)$ such that $[2(4+\beta+\mu)+1]\epsilon^2<\sqrt{\beta c}(r/2)^2$, and choose an interval $[\lambda^-,\lambda^+]$ such that $|e(x)-a^-|\leq \epsilon/16$, $\forall x\leq \lambda^-$, $|e(x)-a^+|\leq \epsilon/16$, $\forall x\geq \lambda^+$, and  $|e'(x)|\leq \epsilon/8$, $\forall x\in \R\setminus (\lambda^-,\lambda^+)$. According to \eqref{propaa}, we have for $k\geq N$ large enough:
\begin{subequations}
\begin{equation}\label{bound6}
	|\bar u_k(\lambda^\pm)-a^\pm|< \epsilon/8, \ |\bar u'_k(\lambda^\pm)|< \epsilon/4,
	\end{equation}
\begin{equation}\label{bound7}
	\Big| \int_{[\lambda^-,\lambda^+]} ( W(\bar u_k)-W(e)-\nabla W(e)\cdot (\bar u_k-e))\Big|< \epsilon^2,
	\end{equation}
\begin{equation}\label{bound7b}
		\|\bar u_k-e\|_{L^2( [\lambda^-,\lambda^+];\R^m)}<\epsilon, 
	\end{equation}
\begin{equation}\label{bound8}
	\mathcal W(\bar u_k)<\epsilon^2.
	\end{equation}
\end{subequations}
Then, combining \eqref{bound4} with \eqref{bound8}, one can see that
\begin{equation}\label{bound9}
	J_{\R\setminus [\lambda^-,\lambda^+]}(\bar u_k)< [2(4+\beta+\mu)+1]\epsilon^2< \sqrt{\beta c}(r/2)^2.
	\end{equation}
Therefore, in view of \eqref{bound5} and \eqref{bound6}, it follows that $|\bar u_k(x)-a^-|\leq r$, $\forall x\leq \lambda^-$ (resp. $|\bar u_k(x)-a^+|\leq r$, $\forall x\geq \lambda^+$). Furthermore, as a consequence of \eqref{convex2} we get
\begin{equation}\label{bound10}
	\int_{\R\setminus [\lambda^-,\lambda^+]}	(W(\bar u_k)-W(e)-\nabla W(e)\cdot (\bar u_k-e))\geq \frac{c}{2} \|\bar u_k-e\|_{L^2( \R\setminus[\lambda^-,\lambda^+];\R^m)}^2.
	\end{equation}
To conclude, we apply formula \eqref{formula} to $\bar u_k$, and combine \eqref{bound8} with \eqref{bound7} and \eqref{bound10}, to obtain
\begin{equation}\label{bound11}
	\|\bar u_k-e\|_{L^2( \R\setminus [\lambda^-,\lambda^+];\R^m)}< \frac{2\epsilon}{\sqrt{c}}, \ \|\bar u'_k-e'\|_{L^2( \R;\R^m)}<\frac{2\epsilon}{\sqrt{\beta}}, \ \text{  and  } \|\bar u''_k-e''\|_{L^2( \R;\R^m)}<2\epsilon.
	\end{equation}
Finally, in view of \eqref{bound7b}, we have $\|\bar u_k-e\|_{L^2( \R;\R^m)}<\big(1+ \frac{2}{\sqrt{c}})\epsilon$. This establishes our claim \eqref{claime}, from which the statement (ii) of Lemma \ref{lem1w} is straightforward.
\end{proof}

From the arguments in the proof of Lemma \ref{lem1w}, we deduce some useful properties of the sets $F^\pm$ and $\mathcal F^\pm$ (defined in the previous section).
\begin{lemma}\label{propcon}
\begin{itemize}
\item[(i)] Let $\{e_k\}\subset F$ be bounded in $\mathcal H$, then
there exists $e\in F$, such that up to subsequence $\lim_{k\to\infty}\|e_k-e\|_{H^{2}(\R;\R^m)}=0$.
\item[(ii)] There exists a constant $\gamma>0$, such that for every $e\in F$, we can find $T \in \R$ such that setting $e^T (x)=e(x-T)$, we have $\|e^T-\ee_0\|_{H^2(\R;\R^m)}\leq \gamma$.
\item[(iii)] For every $v\in \mathcal H$, there exist $e^\pm\in F^\pm$ such that $d(v,F^\pm)=\|v-e^\pm\|_{L^2(\R;\R^m)}$. In particular, the functions $\mathcal H \ni v\mapsto d(v,F^\pm)$ are continuous.
\item[(iv)] The sets $\mathcal F^\pm$ are sequentially weakly closed in $\mathcal H$, and strongly closed in $\mathcal H$. Furthermore, we have 
$$\{v\in \mathcal H: d(v,F^-)\leq d_{\mathrm{min}}/2\}\subset \mathcal F^-, \ \{v\in \mathcal H: d(v,F^+) \leq d_{\mathrm{min}}/2\}\subset \mathcal F^+,$$ 
$$\mathcal F^-\cap \{v\in \mathcal H: d(v,F^+)<d_{\mathrm{min}}/4\}=\varnothing, \ \mathcal F^+\cap \{v\in \mathcal H: d(v,F^-)<d_{\mathrm{min}}/4\}=\varnothing.$$
$$\{v+h: v \in \mathcal H, \, h\in L^2(\R;\R^m),\, d(v,F^-)=d(v,F^+), \|h\|_{L^2(\R;\R^m)}\leq d_{\mathrm{min}}/4\}\subset\mathcal F^-\cap\mathcal F^+,$$
$$\partial \mathcal F^-\subset \{v\in \mathcal H: d(v,F^+) <d(v,F^-)\}\subset\mathcal F^+, \ \partial \mathcal F^+\subset \{v\in \mathcal H: d(v,F^-) <d(v,F^+)\}\subset\mathcal F^-.$$
\end{itemize}
\end{lemma}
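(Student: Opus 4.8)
The plan is to treat the four items in order, leaning heavily on the quantitative estimates already produced in the proof of Lemma \ref{lem1w}. For part (i), let $\{e_k\}\subset F$ be bounded in $\mathcal H$. Since each $e_k$ is a minimal heteroclinic, $\mathcal W(e_k)=0$, so Lemma \ref{lem1w}(ii) applies with the trivial sequence: there is a sequence of shifts $\{x_k\}$ and some $\hat e\in F$ with $e_k(\cdot-x_k)\to\hat e$ in $H^2$. The point is that the shifts must stay bounded. If $|x_k|\to\infty$ along a subsequence, then $\|e_k-e_k(\cdot-x_k)\|_{L^2(\R;\R^m)}\to 0$ forces a contradiction with boundedness in $\mathcal H$: since $e_k(\cdot-x_k)\to\hat e$ in $L^2$ while $e_k=\ee_0+h_k$ with $\|h_k\|_{L^2}$ bounded, one checks using the explicit step-like profile of $\ee_0$ and the exponential estimates \eqref{expest} that translating $\ee_0$ by a large amount costs an $L^2$-distance growing like $\sqrt{|x_k|}$ (the two constant tails $a^\pm$ are distinct, $a^+\ne a^-$). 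Hence $\{x_k\}$ is bounded, and passing to a further subsequence $x_k\to x_\infty$; setting $e:=\hat e(\cdot+x_\infty)\in F$ (translates of heteroclinics are heteroclinics, and minimality is translation invariant) we get $\|e_k-e\|_{H^2}\to0$.

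For part (ii), fix one minimal heteroclinic $e_0\in F$; by \eqref{expest} we may pick $T_0$ so that $e_0(\cdot-T_0)$ is $H^2$-close to $\ee_0$, giving a bound $\gamma_0$. For a general $e\in F$, I would argue by contradiction: if no shift of $e$ lies within any fixed radius of $\ee_0$, combine with part (i) applied to a minimizing sequence to reach a contradiction. More directly: the set $\{e\in F: \min_T\|e^T-\ee_0\|_{H^2}\le\gamma\}$ — call it $F_\gamma$ — is nonempty for $\gamma$ large; if some $e\in F$ had $\min_T\|e^T-\ee_0\|_{H^2}$ arbitrarily large, take such a sequence $\{e_k\}$, each shifted to (say) minimize the distance to $\ee_0$; these shifted orbits are then bounded away from $\ee_0$ in $H^2$ but their $\mathcal W$ vanishes, so Lemma \ref{lem1w}(ii) produces shifts and a limit $e\in F$ — and the argument of part (i) on the boundedness of shifts, together with the minimality of the chosen shift, pins the whole sequence near a translate of $e$, contradicting that the distances diverge. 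Thus a uniform $\gamma$ works.

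For part (iii), given $v\in\mathcal H$, take a sequence $\{e_k\}\subset F^-$ with $\|v-e_k\|_{L^2}\to d(v,F^-)$. Then $\{e_k\}$ is bounded in $L^2(\R;\R^m)$, hence (since $e_k-\ee_0$ is bounded in $L^2$) bounded in $\mathcal H$; apply part (i) to extract $e^-\in F$ with $e_k\to e^-$ in $H^2$, so $\|v-e^-\|_{L^2}=d(v,F^-)$. It remains to see $e^-\in F^-$: being an $H^2$-limit (hence $C^1_{\mathrm{loc}}$-limit) of elements of $F^-$, and $F=F^-\cup F^+$ with $d(F^-,F^+)=d_{\mathrm{min}}>0$, the limit cannot jump into $F^+$, so $e^-\in F^-$; symmetrically for $F^+$. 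Continuity of $v\mapsto d(v,F^\pm)$ is then the usual $1$-Lipschitz property of a distance-to-a-set function in the metric $d$ on $\mathcal H$. For part (iv), the inclusions are bookkeeping with the definitions of $\mathcal F^\pm$, the triangle inequality, and the fact that $d(F^-,F^+)=d_{\mathrm{min}}$: e.g. if $d(v,F^-)\le d_{\mathrm{min}}/2$ then $d(v,F^-)\le d(v,F^+)$ (else $d(F^-,F^+)<d_{\mathrm{min}}$), and $v=v+0\in\mathcal F^-$; the strict inclusions excluding the other well follow because a point within $d_{\mathrm{min}}/4$ of $F^+$ has, by triangle inequality, distance $>3d_{\mathrm{min}}/4$ from $F^-$, incompatible with the constraints defining $\mathcal F^-$; the mixed set where $d(v,F^-)=d(v,F^+)$ lies in both $\mathcal F^\pm$ by definition; and the boundary statements follow since the strict inequality $d(v,F^+)<d(v,F^-)$ is an open condition whose closure still satisfies $d(v,F^+)\le d(v,F^-)$, placing $\partial\mathcal F^-$ inside $\mathcal F^+$. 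Sequential weak closedness of $\mathcal F^-$: if $v_k+h_k\rightharpoonup w$ in $\mathcal H$ with $d(v_k,F^-)\le d(v_k,F^+)$ and $\|h_k\|_{L^2}\le d_{\mathrm{min}}/4$, then $h_k\rightharpoonup h$ (subsequence) with $\|h\|_{L^2}\le d_{\mathrm{min}}/4$ by weak lower semicontinuity, $v_k\rightharpoonup v:=w-h$; using part (iii) to realize the distances at points $e_k^\pm\in F^\pm$ which by part (i) converge strongly (along subsequences) one passes the inequality $d(v_k,F^-)\le d(v_k,F^+)$ to the limit via the $C^1_{\mathrm{loc}}$/weak convergence, yielding $w\in\mathcal F^-$; strong closedness is then immediate.

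The main obstacle is the boundedness of the translation parameters $x_k$ underlying part (i): Lemma \ref{lem1w}(ii) only gives convergence after unknown shifts, and one must genuinely use the structure of $\mathcal H=\ee_0+L^2$ — specifically that $\ee_0$ interpolates between the distinct constants $a^-$ and $a^+$ — to rule out drift to infinity, since a large translate of a heteroclinic leaves every $L^2$-neighbourhood of every fixed heteroclinic. Everything afterwards (parts ii–iv) is a fairly mechanical consequence of part (i) together with the triangle inequality and the separation hypothesis \eqref{parti}.
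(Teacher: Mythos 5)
Your proposal matches the paper's for parts (ii)--(iv) essentially step for step (minimizing sequence plus part (i) for (iii); realization of distances at points $e_k^\pm$ that converge strongly, then pass the inequality to the limit, for (iv)), but for part (i) it takes a genuinely different route. The paper never introduces translation parameters in (i): boundedness of $\{e_k\}$ in $\mathcal H$ gives a weak subsequential limit $e$ in $\mathcal H$, which Ascoli (via Lemma \ref{lem1}) upgrades to $C^1_{\mathrm{loc}}$ convergence to the same $e$, with $e \in F$ by weak lower semicontinuity of $\mathcal W$; the quantitative estimates \eqref{formula}--\eqref{bound11} are then rerun with $e_k$ in place of $\bar u_k$ to get $H^2$ convergence. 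You instead invoke Lemma \ref{lem1w}(ii) as a black box, which hands you a shifted sequence $e_k(\cdot - x_k) \to \hat e$, and then argue separately that $\{x_k\}$ is bounded by exploiting the step-like profile of $\ee_0$. Both routes work; the paper's is more economical because it never has to exclude drift, which is the main technical nuisance in yours.

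Two local fixes. First, in part (i) the sentence ``$\|e_k - e_k(\cdot - x_k)\|_{L^2(\R;\R^m)} \to 0$ forces a contradiction'' is incorrect as written: nothing asserts that quantity tends to zero, and if $|x_k| \to \infty$ it in fact blows up at rate $\sim |x_k|^{1/2}|a^+ - a^-|$. The argument you then sketch is the right one and does not need that claim: since $e_k(\cdot - x_k) \to \hat e$ in $L^2$, $\|e_k(\cdot - x_k) - \ee_0\|_{L^2}$ is bounded; $\|e_k - \ee_0\|_{L^2}$ is bounded by hypothesis; and $\|\ee_0(\cdot) - \ee_0(\cdot + x_k)\|_{L^2} \leq \|e_k - \ee_0(\cdot + x_k)\|_{L^2} + \|e_k - \ee_0\|_{L^2}$ is therefore bounded, ruling out $|x_k| \to \infty$. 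Second, in part (iv) your justification that $\partial\mathcal F^- \subset \{d(\cdot,F^+) < d(\cdot,F^-)\}$ (``the strict inequality is an open condition whose closure \dots'') does not by itself give the inclusion; what is actually needed, and what the paper uses, is the converse implication: any $u \in \mathcal F^-$ with $d(u,F^+) \geq d(u,F^-)$ can be written $u = u + 0$ with $\|0\|_{L^2} = 0 < d_{\mathrm{min}}/4$, so the whole ball $u + B(0, d_{\mathrm{min}}/4)$ lies in $\mathcal F^-$ and $u$ is interior; hence boundary points must satisfy $d(\cdot,F^+) < d(\cdot,F^-)$, and any such point belongs to $\mathcal F^+$ by taking $h=0$ in the definition of $\mathcal F^+$.
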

\begin{proof}
(i) Since $\{e_k\}\subset F$ is bounded in $\mathcal H$, we have up to subsequence $e_k \rightharpoonup e$ in $\mathcal H$, as $k\to\infty$, for some $e\in\mathcal H$.
Proceeding as in the proof of Lemma \ref{lem1w} (i), we first obtain that (up to subsequence) $e_k \to  e$ in $C^1_{\mathrm{loc}}(\R;\R^m)$, as $k\to\infty$, with $e\in F$. Next, we reproduce the arguments after \eqref{claime}, with $e_k$ instead of $\bar u_k$.

(ii) Assume by contradiction the existence of a sequence $\N \ni k \mapsto e_k\in F$, such that $\|e^T_k-\ee_0\|_{H^2(\R;\R^m)}\geq k$, $\forall T\in \R$. Then, by Lemma \ref{lem1w} (ii), there exists a sequence $\{x_k\}\subset\R$, and $e\in F$, such that (up to subsequence) the maps $e_k^{x_k}$ satisfy $\lim_{k\to\infty}\| e_k^{x_k}-e\|_{H^2(\R;\R^m)}=0$. Clearly, this is a contradiction.

(iii) Let $\{e_k^\pm\}\subset F^\pm$ be sequences such that $\|v-e_k^\pm\|_{L^2(\R;\R^m)}\leq d(v,F^\pm)+\frac{1}{k}$, $\forall k$. Then, in view of (i) we have (up to subsequence) $e_k^\pm\to e^\pm$ in $\mathcal H$, as $k\to\infty$, with $e^\pm\in F^\pm$. As a consequence
$d(v,F^\pm)=\|v-e^\pm\|_{L^2(\R;\R^m)}$. 

(iv) We are going to check that $\mathcal F^-$ is sequentially weakly closed (the proof is similar for $\mathcal F^+$). Let $\{u_k\}\subset \mathcal F^-$ be a sequence such that $u_k \rightharpoonup u$ in $\mathcal H$. We write $u_k=v_k+h_k$ with $d(v_k,F^-)\leq d(v_k,F^+)$, and $\|h_k\|_{L^2(\R;\R^m)}\leq d_{\mathrm{min}}/4$. Up to subsequence, we have $h_k \rightharpoonup h$ in $L^2(\R;\R^m)$, with $\|h\|_{L^2(\R;\R^m)}\leq d_{\mathrm{min}}/4$. Thus, it also holds that $v_k \rightharpoonup v:=u-h$ in $\mathcal H$. 
Now, let $\{e_k^\pm\}\subset F^\pm$ be two sequences such that $d(v_k,e_k^\pm)=d(v_k,F^\pm)$. Since the sequences $\{e_k^\pm\}$ are bounded, it follows from (i), that (up to subsequences) $e_k^\pm\to e^\pm$ holds in $\mathcal H$, for some $e^\pm \in F^\pm$. Our claim is that $d(v,e^\pm)=d(v,F^\pm)$. Indeed, given $f^\pm\in F^\pm$, we have 
$$\|v_k-f^\pm\|^2_{\mathcal H}\geq \|v_k-e_k^\pm\|^2_{\mathcal H}\Leftrightarrow \|f^\pm\|^2_{\mathcal H}-2\langle v_k,f^\pm\rangle_{\mathcal H} \geq  \|e_k^\pm\|^2_{\mathcal H}-2\langle v_k,e_k^\pm\rangle_{\mathcal H},$$
and as $k\to\infty$, we get
$$ \|f^\pm\|^2_{\mathcal H}-2\langle v,f^\pm\rangle_{\mathcal H} \geq  \|e^\pm\|^2_{\mathcal H}-2\langle v,e^\pm\rangle_{\mathcal H}\Leftrightarrow\|v-f^\pm\|^2_{\mathcal H}\geq \|v-e^\pm\|^2_{\mathcal H},$$
which proves our claim. To show that $d(v,F^-)\leq d(v, F^+)$, we proceed as previously. By assumption, we have
$$\|v_k-e_k^-\|^2_{\mathcal H}\leq \|v_k-e_k^+\|^2_{\mathcal H}\Leftrightarrow  \|e_k^-\|^2_{\mathcal H}-2\langle v_k,e_k^-\rangle_{\mathcal H} \leq \|e_k^+\|^2_{\mathcal H}-2\langle v_k,e_k^+\rangle_{\mathcal H},$$
and as $k\to\infty$, we get
$$\|e^-\|^2_{\mathcal H}-2\langle v,e^-\rangle_{\mathcal H} \leq \|e^+\|^2_{\mathcal H}-2\langle v,e^+\rangle_{\mathcal H} \Leftrightarrow\|v-e^-\|^2_{\mathcal H}\leq \|v-e^+\|^2_{\mathcal H}.$$
This establishes that $\mathcal F^-$ is sequentially weakly closed, and thus also strongly closed. In view of the inequality $d(v,F^+)+d(v,F^-)\geq d_{\mathrm{min}}:=d(F^-,F^+)$, it is clear that $\{v\in \mathcal H: d(v,F^-)\leq d_{\mathrm{min}}/2\}\subset \mathcal F^-$ (resp. $\{v\in \mathcal H: d(v,F^+)\leq d_{\mathrm{min}}/2\}\subset \mathcal F^+$), and $\mathcal F^-\cap \{v\in \mathcal H: d(v,F^+)<d_{\mathrm{min}}/4\}=\varnothing$ (resp. $\mathcal F^+\cap \{v\in \mathcal H: d(v,F^-)<d_{\mathrm{min}}/4\}=\varnothing$). On the other hand, the inclusion $\{v+h:v \in \mathcal H, \, h\in L^2(\R;\R^m),\, d(v,F^-)=d(v,F^+), \|h\|_{L^2(\R;\R^m)}\leq d_{\mathrm{min}}/4\}\subset\mathcal F^-\cap\mathcal F^+$ follows immediately from the definition of $\mathcal F^\pm$. Finally, given $v \in\partial \mathcal F^-$ (resp. $v \in\partial \mathcal F^+$), we have $d(v,F^+) <d(v,F^-)$ (resp. $d(v,F^-) <d(v,F^+)$), since otherwise $v$ would be an interior point of $\mathcal F^-$ (resp. $\mathcal F^+$).
\end{proof}

Finally, in Lemma \ref{lem2} below, we give explicit examples of potentials for which the separation condition \eqref{parti} holds\footnote{A similar construction was performed in \cite[Remark 3.6.]{antonop} for the system $\Delta u-\nabla W(u)=0$.}.
\begin{lemma}\label{lem2}
Let $F\in C^2(\R^2;\R)$, $F(u)=\frac{(|u|^2-1)^2}{4}$ be the Ginzburg-Landau potential, let $\epsilon \in (0,1)$, and let $\phi \in C^\infty(\R,[0,\infty))$ be a function such that
\begin{equation*}\phi(t)=\begin{cases}
0 &\text {for } t\leq \sqrt{ 1-\epsilon}, \nonumber \\
1 &\text {for } \sqrt{1-\frac{\epsilon}{2}} \leq t.\nonumber
\end{cases} 
\end{equation*}
Then, the bistable potential $ W_\epsilon(u_1,u_2):=F(u)+u_2^2\phi(|u|^2)$ (with $u=(u_1,u_2)\in\R^2$), satisfies \eqref{parti} provided that $\epsilon\ll 1$.
\end{lemma}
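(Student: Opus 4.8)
The plan is: (i) check that $W_\epsilon$ satisfies \eqref{w13} with $a^\pm=(\pm1,0)$; (ii) combine the reflection symmetry $R(u_1,u_2)=(u_1,-u_2)$ with the fact that, for $\epsilon$ small, the minimal action of \eqref{ode0} for $W=W_\epsilon$ is $O(\epsilon)$ while any heteroclinic on the $u_1$–axis is costly; (iii) rule out minimal heteroclinics passing through both half–planes; (iv) deduce the splitting \eqref{parti}.

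\emph{Step 1.} Since $\phi\ge0$ we have $W_\epsilon\ge F\ge0$, and $W_\epsilon(u)=0$ forces $|u|=1$ (from $F$) and, since $\phi(1)=1$, also $u_2=0$; hence the zeros are exactly $a^\pm:=(\pm1,0)$. Writing $u=(\pm1,0)+v$ gives $F(u)=v_1^2+O(|v|^3)$ and $u_2^2\phi(|u|^2)=v_2^2+O(|v|^3)$, so $\nabla^2W_\epsilon(\pm1,0)=2\,\id$, which yields \eqref{w2}; and \eqref{w3} follows from $W_\epsilon\ge F$. Moreover $W_\epsilon\circ R=W_\epsilon$ and $R$ fixes $a^\pm$, so $e\mapsto R\circ e$ is an action–preserving involution of the set $F$ of minimal heteroclinics of \eqref{ode0}, with $R\circ e=e$ only if $e_2\equiv0$.

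\emph{Step 2.} On the circle $\{|u|=\rho_\epsilon\}$, $\rho_\epsilon:=(1-\epsilon)^{1/4}$, one has $\phi(|u|^2)=\phi(\sqrt{1-\epsilon})=0$, hence $W_\epsilon\equiv\tfrac14(\sqrt{1-\epsilon}-1)^2=O(\epsilon^2)$ there, and likewise on the two short radial segments joining $a^\pm$ to that circle. Running a smooth curve from $a^-$ to $a^+$ along the upper half of this circle, traversed over an $x$–interval of length $L$, gives $J_\R\le C_1L^{-3}+C_2L^{-1}+C_3\epsilon^2L$, and optimising at $L\sim\epsilon^{-1}$ yields $J_{\min}(W_\epsilon)=O(\epsilon)$. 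Conversely, any $e\in F$ with $e_2\equiv0$ is a heteroclinic of the scalar equation \eqref{ode0} with potential $s\mapsto\tfrac14(s^2-1)^2$, so $J_\R(e)\ge\sqrt{2\beta}\int_{-1}^{1}\tfrac12(1-s^2)\,\dd s=\tfrac23\sqrt{2\beta}>0$, independently of $\epsilon$. Hence for $\epsilon$ small no element of $F$ lies on the $u_1$–axis.

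\emph{Step 3 (the crux).} From $\tfrac\beta2|e'|^2+W_\epsilon(e)\ge\sqrt{2\beta W_\epsilon(e)}\,|e'|$ one obtains a ``linger or cross fast'' bound: a path crossing a ball of radius $\eta$ on which $W_\epsilon\ge c>0$ has action at least $\eta\sqrt{\beta c}$. Applied at points of the $u_1$–axis, where $W_\epsilon(u_1,0)=\tfrac14(u_1^2-1)^2$ is bounded below for $u_1$ away from $\pm1$ and equals $\tfrac14\ge\tfrac{9}{64}$ at $u_1=0$, this shows (for $\epsilon$ small, so that $J_{\min}(W_\epsilon)$ falls below these $\epsilon$–independent thresholds) that every $e\in F$ satisfies $\sup_x|e_2(x)|\ge\tfrac12$, and that $e_2$ vanishes only within a radius $\rho_0$ of $a^\pm$, with $\rho_0\to0$ as $\epsilon\to0$. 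The key claim is then that, for $\epsilon$ small, no $e\in F$ has both $\sup_xe_2\ge\tfrac12$ and $\inf_xe_2\le-\tfrac12$. Using $J_\R(e)\ge\sqrt{2\beta}\int_\gamma\sqrt{W_\epsilon}\,\dd s$ (with $\gamma$ the image of $e$), this reduces to a length comparison in the conformal metric $\sqrt{2\beta W_\epsilon}\,|\cdot|$: because $W_\epsilon$ is of size $\tfrac14(1-\sqrt{1-\epsilon})^2=O(\epsilon^2)$ only in a thin tube about $\{|u|=\rho_\epsilon\}$ and is bounded below, by an $\epsilon$–independent constant, at Euclidean distance $\ge\delta_0$ from that circle and from $a^\pm$, the $W_\epsilon$–shortest curves from $a^-$ to $a^+$ essentially trace one half of that circle; a curve that in addition meets $\{u_2=\tfrac12\}$ and $\{u_2=-\tfrac12\}$ must trace a strictly longer arc of the circle (by an angular factor $\ge\tfrac43$), hence has strictly larger $W_\epsilon$–length. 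Comparing with the upper bound $J_{\min}(W_\epsilon)=O(\epsilon)$ of Step 2 gives the contradiction. This is the argument of \cite[Remark 3.6.]{antonop} (see also \cite{abg}) adapted to $W_\epsilon$; making the geodesic comparison quantitative is, I expect, the only genuine difficulty.

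\emph{Step 4.} Set $F^+:=\{e\in F:\sup_xe_2\ge\tfrac12\}$ and $F^-:=\{e\in F:\inf_xe_2\le-\tfrac12\}$. By Step 3 these sets are disjoint, and by the bound $\sup_x|e_2|\ge\tfrac12$ their union is $F$; since $R$ maps $F^+$ onto $F^-$, both are nonempty. Finally, suppose $d(F^-,F^+)=0$ and pick $e_k^\pm\in F^\pm$ with $\|e_k^+-e_k^-\|_{L^2(\R;\R^m)}\to0$. By Lemma~\ref{lem1w}(ii) there are $x_k\in\R$ and $e\in F$ with $e_k^+(\cdot-x_k)\to e$ in $H^2(\R;\R^m)$; then $e_k^-(\cdot-x_k)$ lies in $F$, is bounded in $\mathcal H$, and converges to $e$ in $L^2(\R;\R^m)$, so Lemma~\ref{propcon}(i) gives $e_k^-(\cdot-x_k)\to e$ in $H^2(\R;\R^m)$ as well. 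Passing to the uniform limit in the (translation–invariant) inequalities $\sup_x(e_k^+)_2\ge\tfrac12$ and $\inf_x(e_k^-)_2\le-\tfrac12$ yields $\sup_xe_2\ge\tfrac12$ and $\inf_xe_2\le-\tfrac12$, contradicting Step 3. Hence $d_{\min}=d(F^-,F^+)>0$ and \eqref{parti} holds, provided $\epsilon\ll1$.
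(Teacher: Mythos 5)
Your Steps 1 and 2, and the confinement estimate opening Step 3, match the paper: verification of \eqref{w13}, the $O(\epsilon)$ upper bound for $J_{\mathrm{min}}(\epsilon)$ via a test curve tracing a near-unit circle on which $\phi$ vanishes, the conclusion (via the quantity $\int\sqrt{2\beta W_\epsilon}\,|v'|$) that every minimal heteroclinic stays in $\{|u|\ge 1/2\}$, and the reflection symmetry. The divergence, and the gap, is the rest of your Step 3. Because you define $F^\pm$ by the sign of $e_2$ at its extremes, you must prove that no minimal heteroclinic meets both $\{u_2\ge\tfrac12\}$ and $\{u_2\le-\tfrac12\}$; you only sketch this via a weighted-length (geodesic) comparison and yourself flag it as unproved. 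As sketched it does not close: the lower bound $J_\R(e)\ge\int\sqrt{2\beta W_\epsilon}\,|e'|$ discards the $\tfrac12|e''|^2$ term, whereas the $O(\epsilon)$ upper bound on $J_{\mathrm{min}}(\epsilon)$ necessarily retains the second-derivative cost of the test curve; both quantities are of order $\epsilon$, with constants that need not be ordered favourably, so ``a path sweeping total angle $\ge 2\pi$ has weighted length exceeding $J_{\mathrm{min}}(\epsilon)$'' is not established by comparing orders of magnitude alone.

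The paper avoids this step entirely. It partitions $F$ according to the homotopy class, in the annulus $\{|u|\ge 1/2\}$, of the orbit joining $a^-$ to $a^+$ (clockwise versus counterclockwise), so that $F=F^-\cup F^+$ is automatic and nonemptiness of both classes follows from the reflection symmetry. Separation is then topological and quantitative: if $\|e^+-e^-\|_{L^\infty}\le\tfrac12$, the linear interpolation between $e^+$ and $e^-$ stays in $\{|u|\ge 1/4\}$, so the two orbits would be homotopic there; hence $|e^+(x_0)-e^-(x_0)|^2\ge\tfrac14$ for some $x_0$, and the Lipschitz bound $\big|\tfrac{\dd}{\dd x}|e^+-e^-|^2\big|\le 8\mu$ (with $\mu$ the uniform bound from Lemma \ref{lem1}) converts this into $\|e^+-e^-\|^2_{L^2(\R;\R^m)}\ge\tfrac{1}{128\mu}$. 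To repair your argument, either adopt this homotopy-class definition of $F^\pm$, or replace the geodesic sketch by a genuine comparison of the minimal actions over the two homotopy classes (e.g.\ by cut-and-paste), which is substantially more work. Note also that your Step 4 (compactness via Lemma \ref{lem1w}(ii) and Lemma \ref{propcon}(i)) is correct but only yields $d_{\mathrm{min}}>0$ qualitatively, where the paper's argument produces an explicit lower bound.
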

\begin{proof}
Clearly, $W_\epsilon$ is a bistable potential vanishing at $a^\pm=(\pm1,0)$. One can easily check that it satisfies \eqref{w13}. 
Let $J_{\mathrm{min}}(\epsilon)$ be the action of a minimal heteroclinic for O.D.E. \eqref{ode0} with the potential $W_\epsilon$.
Our claim is that $J_{\mathrm{min}}(\epsilon)\to 0$, as $\epsilon\to 0$. To prove this, we construct a comparison map $v\in A$ as follows
\begin{equation*}
v(x)=\begin{cases}
a^-=(-1,0) &\text {for } x\leq -\frac{1}{\epsilon}-2, \\
\big(-1+\frac{\epsilon}{2}\big)+\frac{\epsilon}{4}(3(x+\frac{1}{\epsilon}+1)-(x+\frac{1}{\epsilon}+1)^3) &\text {for } -\frac{1}{\epsilon}-2 \leq x \leq -\frac{1}{\epsilon},\\
(1-\epsilon) e^{i\big(\frac{\pi}{4}(3(\epsilon x)-(\epsilon x)^3)-\frac{\pi}{2}\big)} &\text {for } -\frac{1}{\epsilon} \leq x \leq \frac{1}{\epsilon},\\
\big(1-\frac{\epsilon}{2}\big)+\frac{\epsilon}{4}(3(x-\frac{1}{\epsilon}-1)-(x-\frac{1}{\epsilon}-1)^3) &\text {for } \frac{1}{\epsilon} \leq x \leq \frac{1}{\epsilon}+2,\\
a^+=(1,0) &\text {for } x\geq \frac{1}{\epsilon}+2.
\end{cases} 
\end{equation*}
A long but otherwise not difficult computation shows that $J_\R(v)\leq 2\epsilon|2-\epsilon|^2+|1-\epsilon|^2(\beta \kappa_1\epsilon+\kappa_2\epsilon^3)$, for some constants $\kappa_1,\kappa_2>0$. This establishes that $J_{\mathrm{min}}(\epsilon)\to 0$, as $\epsilon\to 0$. On the other hand, one can see (cf. \eqref{bound5}) that
\begin{equation}\label{wizz}
	\inf\{J_{[x_1,x_2]}(v): v\in H^{2}([x_1,x_2];\R^m), |v(x_1)|\geq \sqrt{3}/2, |v(x_2)|\leq 1/2 \}\geq \frac{\sqrt{2\beta}(\sqrt{3}-1)}{16}
	\end{equation}
holds, provided that $\epsilon<1-\frac{\sqrt{3}}{2}$. As a consequence, if $\epsilon$ is small enough (such that $J_{\mathrm{min}}(\epsilon)< \frac{\sqrt{2\beta}(\sqrt{3}-1)}{16}$), then every minimal heteroclinic $e\in F$ takes its values into $\{u \in\R^2: |u|\geq 1/2\}$. We also notice that since the potential $W_\epsilon$ is invariant by the reflection with respect to the $u_1$ coordinate axis, $x\mapsto e(x):=(e_1(x),e_2(x))\in\R^2$ is a minimal heteroclinic iff $x\mapsto e(x):=(e_1(x),-e_2(x))\in\R^2$ is a minimal heteroclinic. Now, let $F^+$ (resp. $F^-$) be the set of minimal heteroclinics connecting $a^-$ to $a^+$ in the clockwise (resp. counterclockwise) direction. In view of the aforementioned symmetry property, it is clear that $F^-\neq \varnothing$ and $F^+\neq \varnothing$. Our claim is that $d(F^-,F^+)>0$. To check this, let $\mu:=\sup\{\|e '\|_{L^\infty(\R;\R^m)}\cdot \|e\|_{L^\infty(\R;\R^m)}: e\in F\}<\infty$ (cf. Lemma \ref{lem1}), and given $e^\pm\in F^\pm$, let $\psi(x):=|e^+(x)-e^-(x)|^2$. Since we have $|\psi'|\leq 8\mu$, and $\psi(x)\geq \psi (x_0)-8\mu |x-x_0|$, $\forall x,x_0\in\R$, the condition $\psi(x_0)\geq \frac{1}{4}$ for some $x_0\in\R$, implies that $\int_{\R} \psi\geq \frac{1}{128\mu}$. Finally, we notice that $\|e^+-e^-\|_{L^\infty(\R;\R^m)}^2\leq \frac{1}{4}$ does not hold, since otherwise the orbits of $e^+$ and $e^-$ would be homotopic in the set $\{u \in\R^2: |u|\geq 1/4\}$. This proves that $d(e^-,e^+)\geq \frac{1}{\sqrt{128\mu}}$.
\end{proof}

\section{Proof of Theorem \ref{connh2}}\label{sec:sec33}

\begin{proof}[Existence of the minimizer $U$]
We first establish that $\inf_{V\in\mathcal{A}}\mathcal J_{\R}<\infty$. Indeed, given $e^\pm_0\in F^\pm$, let
\begin{equation}\label{minfun}
\VV_0(t)=
\begin{cases}
e^-_0,&\text{ for } t\leq -1,\\
\frac{e^+_0+e^-_0}{2}+(e_0^+ -e_0^-)\frac{3t-t^3}{4},&\text{ for }-1 \leq t\leq 1,\\
e_0^+,&\text{ for } t\geq 1.
\end{cases}
\end{equation}
One can check that $\VV_0\in\mathcal A$, and $\mathcal J_0:=\mathcal J_{\R}(\VV_0)<\infty$, since $e^-_0$ and $e^+_0$ satisfy the exponential estimate \eqref{expest}. Setting $\mathcal{A}_b=\{V\in\mathcal A: \mathcal J_{\R}(V)\leq \mathcal J_0\}$, it is clear that $\inf_{V\in\mathcal{A}}\mathcal J_{\R}(V)=\inf_{V\in\mathcal{A}_b}\mathcal J_{\R}(V)<+\infty$. In the following lemma, we establish some properties of finite energy orbits $V\in\mathcal{A}_b$:

\begin{lemma}\label{lem1aa}
There exist a constant $M>0$ such that $\|V(t_2)-V(t_1)\|_{L^2(\R;\R^m)}\leq M |t_2-t_1|^{1/2}$, $\forall t_1, t_2\in\R$, $\forall V\in\mathcal{A}_b$, and a constant $M'>0$ such that $\|V'(t)\|_{L^2(\R;\R^m)}\leq M'$, $\forall t\in\R$, $\forall V\in\mathcal{A}_b$. Moreover every map $V\in\mathcal A_b$ satisfies $\lim_{t\to\pm\infty}d(V(t),F^\pm)=0$, and $\lim_{t\to\pm\infty}\|V'(t)\|_{L^2(\R;\R^m)}=0$.
\end{lemma}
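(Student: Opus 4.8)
The plan is to extract all four conclusions from the single energy bound $\mathcal J_{\R}(V)\le \mathcal J_0$, using only elementary Cauchy--Schwarz arguments in the Hilbert space $\mathcal H$ together with the coercivity properties of $\mathcal W$ established in Lemma 1.3.

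First I would prove the Hölder estimate. For $V\in\mathcal A_b$, the term $\int_\R \tfrac12\|V'(t)\|_{L^2}^2\,\dd t\le \mathcal J_0$ shows $V'\in L^2(\R;L^2(\R;\R^m))$ with norm controlled by $\sqrt{2\mathcal J_0}$. Since $V\in H^2_{\rm loc}(\R;\mathcal H)$, for $t_1<t_2$ one writes $V(t_2)-V(t_1)=\int_{t_1}^{t_2}V'(s)\,\dd s$ and Cauchy--Schwarz in $s$ gives $\|V(t_2)-V(t_1)\|_{L^2(\R;\R^m)}\le \big(\int_{t_1}^{t_2}\|V'(s)\|_{L^2}^2\,\dd s\big)^{1/2}|t_2-t_1|^{1/2}\le \sqrt{2\mathcal J_0}\,|t_2-t_1|^{1/2}$, so $M=\sqrt{2\mathcal J_0}$ works.

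Next, the pointwise bound on $\|V'(t)\|_{L^2}$. Here I would interpolate: from $\int_\R\tfrac12\|V''\|_{L^2}^2\le \mathcal J_0$ and $\int_\R\tfrac{\beta}{2}\|V'\|_{L^2}^2\le \mathcal J_0$ one has $V',V''\in L^2(\R;L^2(\R;\R^m))$, hence $t\mapsto \|V'(t)\|_{L^2}^2$ is absolutely continuous with derivative $2\langle V'(t),V''(t)\rangle_{L^2}$, whose integral over $\R$ is absolutely summable; therefore $\|V'(t)\|_{L^2}^2$ has a limit as $t\to\pm\infty$, and this limit must be $0$ because $V'\in L^2$ in $t$. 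Integrating $\big|\tfrac{\dd}{\dd t}\|V'(t)\|_{L^2}^2\big|\le \|V'(t)\|_{L^2}^2+\|V''(t)\|_{L^2}^2$ from $-\infty$ (or using $\|V'(t)\|_{L^2}^2=-\int_t^\infty 2\langle V',V''\rangle_{L^2}$) yields $\|V'(t)\|_{L^2}^2\le \int_\R(\|V'\|_{L^2}^2+\|V''\|_{L^2}^2)\le 2\mathcal J_0(1+2/\beta)$, giving $M'$; and simultaneously this proves $\lim_{t\to\pm\infty}\|V'(t)\|_{L^2(\R;\R^m)}=0$.

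Finally, $\lim_{t\to\pm\infty}d(V(t),F^\pm)=0$. The constraint defining $\mathcal A$ already forces $V(t)\in\mathcal F^+$ for $t\ge t_V^+$, so $d(V(t),F^+)\le d(V(t),F)+\mathrm{const}$ is not by itself enough; instead I would use the potential term $\int_\R\mathcal W(V(t))\,\dd t\le \mathcal J_0<\infty$. Combined with the Hölder continuity $t\mapsto V(t)$ and the lower bound from Lemma 1.3(ii)—namely that $d(u,F)\ge c_1$ implies $\mathcal W(u)\ge c_2(c_1)>0$—one argues by contradiction: if $d(V(t_n),F)\ge c_1$ along $t_n\to+\infty$, then by the uniform modulus of continuity $M|t-t_n|^{1/2}$ of $V$ one gets $d(V(t),F)\ge c_1/2$ on intervals of fixed positive length around each $t_n$, forcing $\mathcal W(V(t))\ge c_2(c_1/2)$ there and hence $\int_\R\mathcal W(V(t))\,\dd t=\infty$, a contradiction. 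Thus $d(V(t),F)\to 0$ as $t\to+\infty$; since $V(t)\in\mathcal F^+$ for large $t$, which by Lemma 1.5(iv) is disjoint from $\{d(v,F^-)<d_{\mathrm{min}}/4\}$, the nearest point of $F$ must lie in $F^+$ once $d(V(t),F)<d_{\mathrm{min}}/4$, so in fact $d(V(t),F^+)\to0$; symmetrically for $t\to-\infty$.

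The main obstacle is the last point: one must carefully convert the finiteness of $\int_\R\mathcal W(V(t))\,\dd t$ into a genuine limit, which requires both the uniform Hölder modulus of continuity (to rule out narrow spikes away from $F$) and the coercivity estimate of Lemma 1.3(ii) (to bound $\mathcal W$ from below away from $F$); the replacement of $d(\cdot,F)$ by $d(\cdot,F^\pm)$ then uses the separation geometry recorded in Lemma 1.5(iv).
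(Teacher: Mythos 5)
Your proposal follows essentially the same path as the paper: Cauchy--Schwarz on $V(t_2)-V(t_1)=\int_{t_1}^{t_2}V'(s)\,\dd s$ for the H\"older bound, the Sobolev embedding $H^1(\R;L^2)\hookrightarrow L^\infty(\R;L^2)$ (which you re-derive by hand via the derivative of $\|V'(t)\|_{L^2}^2$) for the uniform bound and decay of $V'$, and the contradiction argument combining the H\"older modulus, the coercivity in Lemma \ref{lem1w}(ii), and the finiteness of $\int_\R\mathcal W(V)$ to get $d(V(t),F)\to0$, with the upgrade to $d(V(t),F^\pm)\to0$ via the separation properties of $\mathcal F^\pm$ in Lemma \ref{propcon}(iv). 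The only slips are in the constants (the $\beta/2$ in front of $\|V'\|^2$ in $\mathcal J$ should give $M=\sqrt{2\mathcal J_0/\beta}$, not $\sqrt{2\mathcal J_0}$, and similarly for $M'$), which do not affect the argument.
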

\begin{proof}
It is clear that for every $t_1<t_2$, and every $V\in\mathcal{A}_b$, we have
\begin{equation*}
\|V(t_2)-V(t_1)\|_{L^2(\R;\R^m)}\leq \int_{t_1}^{t_2}\|V'(s)\|_{L^2(\R;\R^m)}\dd t\leq \Big|\int_{t_1}^{t_2} \|V'(s)\|^2_{L^2(\R;\R^m)}\dd t\Big|^{1/2}|t_2-t_1|^{1/2}\leq M |t_2-t_1|^{1/2},
\end{equation*}
with $M=\sqrt{2\mathcal J_0/\beta}$. To establish that $\lim_{t\to\pm\infty}d(V(t),F)=0$, assume by contradiction the existence of a sequence $t_k$ such that $\lim_{k\to\infty}| t_k|=\infty$, and $d(V(t_k),F)\geq 2\epsilon$, for some $\epsilon>0$. According to what precedes we have $d(V(t),F)\geq \epsilon$, $\forall t\in [t_k-\eta,t_k+\eta]$, with $\eta:= (\epsilon/M)^2$. Thus, Lemma \ref{lem1w} (ii) implies that $\mathcal W(V(t))\geq w_\epsilon>0$ holds for every $t\in [t_k-\eta,t_k+\eta]$, with $w_\epsilon:=\inf\{\mathcal W(u): d(u,F)\geq\epsilon\}>0$. In addition, since the intervals $[t_k-\eta,t_k+\eta]$ may be assumed to be disjoint, we obtain $\mathcal J_\R(V)=\infty$, which is a contradiction. Now, it remains to show that $\lim_{t\to\pm\infty}d(V(t),F^\pm)=0$. This property follows from the fact that in a neighbourhood of $-\infty$ (resp. $+\infty$), we have $V(t) \in\mathcal F^-\subset \{v\in \mathcal H: d(v,F^+)\geq d_{\mathrm{min}}/4\}$ (resp. $V(t)\in\mathcal F^+\subset\{v\in \mathcal H: d(v,F^-)\geq d_{\mathrm{min}}/4\}$), in view of Lemma \ref{propcon} (iv). Finally, to prove that $\lim_{t\to\pm\infty}\|V'(t)\|_{L^2(\R;\R^m)}=0$, and $\sup_{t\in\R}\|V'(t)\|_{L^2(\R;\R^m)}\leq M'$, $\forall V\in\mathcal{A}_b$, we notice that $V'$ belongs to $H^1(\R;L^2(\R;\R^m))\hookrightarrow L^\infty(\R;L^2(\R;\R^m))$, and $\|V'\|_{H^1(\R;L^2(\R;\R^m))}$ is uniformly bounded for $V\in\mathcal{A}_b$.
\end{proof}
\begin{figure}[h]
\begin{center}
\includegraphics[scale=1]{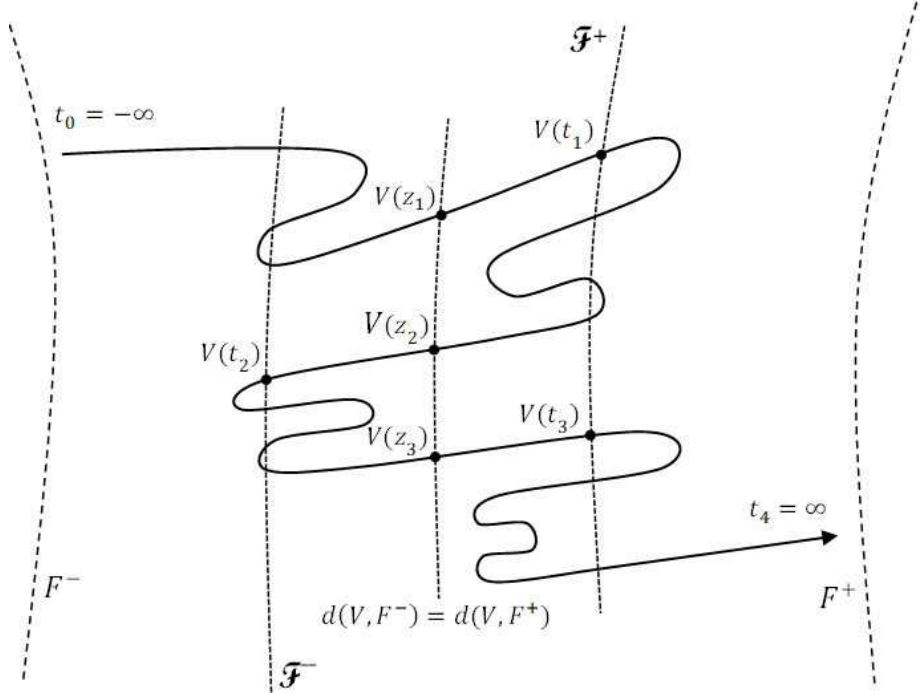}
\end{center}
\caption{The sequence $-\infty=t_0<z_1<t_1< z_2<t_2<\ldots<t_{2N}=\infty$, ($N=2$).}
\label{fig}
\end{figure}
Now, let $\{V_k\}\subset\mathcal A_b$ be a minimizing sequence, i.e. $\lim_{k\to\infty}\mathcal J_\R(V_k)=\inf_{V\in\mathcal{A}_b}\mathcal J_{\R}(V)$.
For every $k$ we define the sequence $$-\infty<t_1(k)<t_2(k)<\ldots<t_{2N_k-1}(k)<t_{2N_k}(k)=\infty$$
by induction:
\begin{itemize}
\item $t_1(k)=\sup\{ t\in\R: \, V_k(s)\in\mathcal F^-, \forall s\leq t\}<\infty$ (note that $V_k(t_1(k))\in \partial\mathcal F^-\subset \mathcal F^+$, and $d(V_k(t_1(k)), F^+)<d(V_k(t_1(k)),F^-)$, in view of Lemma \ref{propcon} (iv)),
\item $t_{2i}(k)=\sup\{ t\geq t_{2i-1}(k): \, V_k(s)\in\mathcal F^+, \forall s\in [t_{2i-1}(k),t]\}\leq\infty$  (note that $t_{2i}(k)<\infty$ implies that $V_k(t_{2i}(k))\in \partial\mathcal F^+\subset \mathcal F^-$, and $d(V_k(t_{2i}(k)), F^-)<d(V_k(t_{2i}(k)),F^+)$, in view of Lemma \ref{propcon} (iv)),
\item $t_{2i+1}(k)=\sup\{ t\geq t_{2i}(k): \, V_k(s)\in\mathcal F^- , \forall s\in [t_{2i}(k),t]\}<\infty$, if $t_{2i}(k)<\infty$ (again we have $V_k(t_{2i+1}(k))\in \partial\mathcal F^-\subset \mathcal F^+$, and $d(V_k(t_{2i+1}(k)), F^+)<d(V_k(t_{2i+1}(k)),F^-)$, in view of Lemma \ref{propcon} (iv)),
\end{itemize}
where $i=1,\ldots,N_k$.
In addition, we set 
\begin{itemize}
\item $z_{2i-1}(k)=\sup\{ t\leq t_{2i-1}(k): \,  d(V_k(t), \mathcal F^-)=d(V_k(t), \mathcal F^+)\}$,
\item $z_{2i}(k)=\sup\{ t\leq t_{2i}(k): \,   d(V_k(t), \mathcal F^-)=d(V_k(t), \mathcal F^+)\}$, if $t_{2i}(k)<\infty$,
\end{itemize}
and define $b_j(k)=\sup \{t\geq z_j(k): \|V_k(s)-V_k(z_j(k))\|_{L^2(\R;\R^m)}<  d_{\mathrm{min}}/4, \forall s \in [z_j(k),t]\}$, for $j=1,\ldots,2N_k-1$. Since the set $\{v+h: v \in \mathcal H, \, h\in L^2(\R;\R^m), \,d(v,F^-)=d(v,F^+), \|h\|_{L^2(\R;\R^m)}< d_{\mathrm{min}}/4\}$ is included in the interior of $\mathcal F^-$ (resp. $\mathcal F^+$), it is clear that $b_j(k)\leq t_j(k)$. In addition, we have $\inf\{\mathcal W(V_k(t)): t\in(z_j(k),b_j(k)) \}\geq \mathcal W_0:=\inf \{\mathcal W(v): d(v,F)\geq d_{\mathrm{min}}/4\}>0$, in view of Lemma \ref{propcon} (iv) and Lemma \ref{lem1w} (ii). Thus, for every $k\geq 1$ and $j=1,\ldots, 2N_k-1$, we obtain
\begin{equation}\label{we1}
\mathcal J_{(z_j(k),b_j(k))}(V_k)\geq  \int_{z_j(k)}^{b_j(k)}\sqrt{2\beta \mathcal W(V_k(t))}\|V'_k(t)\|_{L^2(\R;\R^m)}\dd t\geq \sqrt{2\beta \mathcal W_0} (d_{\mathrm{min}}/4) .\nonumber
\end{equation}
This implies that $(2N_k-1)\sqrt{2\beta \mathcal W_0} (d_{\mathrm{min}}/4) \leq \mathcal J_0$, i.e.
the integers $N_k$ are uniformly bounded. By passing to a subsequence, we may assume that $N_k$ is a constant integer $N\geq 1$.

Our next claim (cf. \cite[Lemma 2.4.]{ps}) is that up to subsequence, there exist an integer $i_0$ ($1\leq i_0\leq N$) and an integer $j_0$ ($i_0\leq j_0\leq N$) such that
\begin{itemize}
\item[(a)] the sequence $t_{2j_0-1}(k)-t_{2i_0-1}(k)$ is bounded,
\item[(b)] $\lim_{k\to\infty}(t_{2i_0-1}(k)-t_{2i_0-2}(k))=\infty$,
\item[(c)] $\lim_{k\to\infty}(t_{2j_0}(k)-t_{2j_0-1}(k))=\infty$,
\end{itemize}
where for convenience we have set $t_0(k):=-\infty$. Indeed, we are going to prove by induction on $N\geq 1$, that given $2N+1$ sequences $-\infty\leq t_0(k)<t_1(k)<\ldots<t_{2N}(k)\leq\infty$, such that $\lim_{k\to\infty}(t_1(k)-t_0(k))=\infty$, and $\lim_{k\to\infty}(t_{2N}(k)-t_{2N-1}(k))=\infty$, then up to subsequence the properties (a), (b), and (c) above hold, for two fixed indices $1\leq i_0\leq j_0\leq N$.
When $N=1$, the assumption holds by taking $i_0=j_0=1$. Assume now that $N>1$, and let $l \geq 1$ be the largest integer such that
the sequence $t_{l}(k)-t_{1}(k)$ is bounded. Note that $l<2N$. If $l$ is odd, we are done, since the sequence $t_{l+1}(k)-t_{l}(k)$ is unbounded, and thus we can extract a subsequence $\{n_k\}$ such that $\lim_{k\to\infty}(t_{l+1}(n_k)-t_{l}(n_k))=\infty$. Otherwise $l=2m$ (with $1\leq m<N$), and the sequence $t_{2m+1}(k)-t_{2m}(k)$ is unbounded. We extract a subsequence $\{n_k\}$ such that $\lim_{k\to\infty}(t_{2m+1}(n_k)-t_{2m}(n_k))=\infty$.
Then, we apply the inductive statement with $N'=N-m$, to the $2N'+1$ sequences $t_{2m}(n_k)<t_{2m+1}(n_k)<\ldots<t_{2N}(n_k)$.

To show the existence of the minimizer $U$, we shall consider appropriate translations of the sequence $v_k(t,x):=[V_k(t)](x)$ ($\R\ni t\mapsto V_k(t)\in \mathcal H$), with respect to both variables $x$ and $t$. Then, we shall establish the convergence of the translated maps to the minimizer $U$. Given $T\in\R$, and $V\in \mathcal H=\ee_0+L^2(\R;\R^m)$, we denote by $L^T(V)$ the map of $\mathcal H$ defined by $\R\ni x\mapsto V(x-T)\in \R^m$. It is obvious that $\mathcal W(L^T(V))=\mathcal W(V)$.
Similarly, if $t\mapsto V(t)$ belongs to $H^{2}_{\mathrm{loc}}(\R;\mathcal H)$, we obtain that $t\mapsto L^T(V(t))$ also belongs to $H^{2}_{\mathrm{loc}}(\R;\mathcal H)$, with 
\begin{itemize}
\item $(L^T V)'=L^T(V')$, $\sigma ((L^T V)')=\sigma(V')$, and $\|(L^T V)'(t)\|_{L^2(\R;\R^m)}=\|V'(t)\|_{L^2(\R;\R^m)}$,
\item $(L^T V)''=L^T(V'')$, and $\|(L^T V)''(t)\|_{L^2(\R;\R^m)}=\|V''(t)\|_{L^2(\R;\R^m)}$.
\end{itemize}

At this stage, we infer that the sequence $d(V_k(t_{2i_0-1}(k)), F^-)$ is bounded. Indeed, when $k$ is large enough, we have $V_k([t_{2i_0-1}(k)-\frac{2\mathcal J_0}{\mathcal W_0}, t_{2i_0-1}(k)])\subset \mathcal F^-$ (where $\mathcal W_0:=\inf \{\mathcal W(v): d(v,F)\geq d_{\mathrm{min}}/4\}>0$). Thus, there exists $s_k\in [t_{2i_0-1}(k)-\frac{2\mathcal J_0}{\mathcal W_0}, t_{2i_0-1}(k)]$ such that $d(V_k(s_k),F^-)<d_{\mathrm{min}}/4$, 
since otherwise we would obtain $\int_{t_{2i_0-1}(k)-\frac{2\mathcal J_0}{\mathcal W_0}}^{ t_{2i_0-1}(k)}\mathcal W(V_k(t))\dd t\geq 2\mathcal J_0$, which is a contradiction. This proves that 
$d(V_k(t_{2i_0-1}(k)), F^-)\leq d_{\mathrm{min}}/4+d(V_k(s_k),V_k(t_{2i_0-1}(k)))\leq \eta:=d_{\mathrm{min}}/4+M\sqrt{\frac{2\mathcal J_0}{\mathcal W_0}}$.
We also claim that the sets $\mathcal F^\pm$ are invariant by the translations $L^T$. To check this, let us pick $u\in \mathcal F^-$ (the proof is similar for $\mathcal F^+$). By definition, $u=v+h$, with $v\in \mathcal H$, $h\in L^2(\R;\R^m)$, $d(v,F^-)\leq d(v,F^+)$, and $\|h\|_{L^2(\R;\R^m)}\leq d_{\mathrm{min}}/4$. If $d(v,F^\pm)=d(v,e^\pm)$, for some $e^\pm\in F^\pm$, one can see that $d(L^T v,F^\pm)=d(L^T v,L^T e^\pm)=d(v,e^\pm)$. Therefore, we have $L^T u=L^T v+L^T h$, with $d(L^T v,F^-)\leq d(L^T v,F^+)$, and $\|L^T h\|_{L^2(\R;\R^m)}=\| h\|_{L^2(\R;\R^m)}\leq d_{\mathrm{min}}/4$, i.e. $L^Tu\in \mathcal F^-$.

Next, in view of Lemma \ref{propcon} (ii), for every $k$, we can find $T_k\in \R$ and $e_k \in F^-$ such that
$\|e_k\|_{\mathcal H}\leq \gamma$ and $\||L^{T_k}V_k(t_{2i_0-1}(k))-e_k\|_{\mathcal H}\leq \eta$. We set $\bar V_k(t):=L^{T_k}(V_k(t+t_{2i_0-1}(k)))$. Clearly, $\bar V_k \in H^{2}_{\mathrm{loc}}(\R;\mathcal H)$ satisfies $\mathcal J_\R(\bar V_k)=\mathcal J_\R(V_k)$, as well as
\begin{equation}\label{solinas2}
\bar V_k(t)\in \mathcal F^-, \forall t\in [t_{2i_0-2}(k)- t_{2i_0-1}(k),0], \text{ and } \bar V_k(t) \in\mathcal F^+ , \forall t\in [t_{2j_0-1}(k)- t_{2i_0-1}(k),t_{2j_0}(k)- t_{2i_0-1}(k)].
\end{equation}
On the one hand, since $\|\bar V_k(0)\|_{\mathcal H}\leq \eta+\gamma$ holds for every $ k$, 
we have that (up to subsequence) $\bar V_k(0) \rightharpoonup  u_0$ in $\mathcal H$, as $k\to\infty$, for some $u_0\in\mathcal H$.
On the other hand, since $\bar V'_k$ as well as $\bar V''_k$ are uniformly bounded in $L^2(\R;L^2(\R;\R^m))$, it follows that up to subsequence
\begin{equation}\label{la1}
\bar V'_k\rightharpoonup U_1, \text{ and }\bar V''_k\rightharpoonup U_2 \text{ hold in } L^2(\R,L^2(\R;\R^m)),
\end{equation}
for some $U_1,U_2\in L^2(\R;L^2(\R;\R^m))$, such that $U_1'=U_2$, and
\begin{subequations}\label{lala2}
\begin{equation}
\int_{\R}\| U_1(t)\|^2_{L^2(\R;\R^m)}\dd t\leq\liminf_{k\to\infty} \int_{\R}\|\bar V'_k(t)\|^2_{L^2(\R;\R^m)}\dd t,
\end{equation}
\begin{equation}
\int_{\R}\| U_2(t)\|^2_{L^2(\R;\R^m)} \dd t\leq\liminf_{k\to\infty} \int_{\R}\|\bar V''_k(t)\|^2_{L^2(\R;\R^m)}\dd t.
\end{equation}
\end{subequations}
Finally, we write $\bar V_k(t)=\bar V_k(0)+\int_0^t\bar V'_k(s)\dd s$,
and claim that $U(t):=u_0+\int_0^tU_1(s)\dd s$ is a minimizer of $\mathcal J$ in $\mathcal A$. Indeed, it is clear that $U\in H^2_{\mathrm{loc}}(\R;\mathcal H)$, and since $\int_0^t\bar V'_k(s)\dd s\rightharpoonup  \int_0^t U_1(s)\dd s$ holds in $\mathcal H$ for every $t\in\R$, we also have $\bar V_k(t)\rightharpoonup U(t)$ for every $t\in\R$.  Similarly, since $\|V'_k(0)\|_{L^2(\R;\R^m)}$ is uniformly bounded (cf. Lemma \ref{lem2}), it follows that (up to subsequence) $\bar V'_k(0) \rightharpoonup  u_1$ in $L^2(\R;\R^m)$. Thus, for every $t\in\R$, we have $\bar V'_k(t)=\bar V'_k(0)+\int_0^t\bar V'_k(s)\dd s \rightharpoonup u_1+ \int_0^t U_2(s)\dd s=U_1(t)+h$, for some $h\in L^2(\R;\R^m)$, that we are going to determine. On the one hand, in view of the bound $\|V'_k(t)\|_{L^2(\R;\R^m)}\leq M'$, $\forall k$, $\forall t\in\R$, we obtain by dominated convergence that
$\lim_{k\to\infty}\int_0^t\langle \bar V'_k(s),h\rangle_{L^2(\R;\R^m)} \dd s=\int_0^t\langle  U_1(s),h\rangle_{L^2(\R;\R^m)} \dd s+t\|h\|^2_{L^2(\R;\R^m)}$. On the other hand, using the weak convergence $\bar V'_k\rightharpoonup U_1$ in $L^2(\R;L^2(\R;\R^m))$, we deduce that
$\lim_{k\to\infty}\int_0^t\langle \bar V'_k(s),h\rangle_{L^2(\R;\R^m)} \dd s=\int_0^t\langle  U_1(s),h\rangle_{L^2(\R;\R^m)} \dd s$. Thus $h=0$, and we have established that $\bar V'_k(t) \rightharpoonup U_1(t)$ holds for every $t\in\R$. Now, the sequentially weakly lower semicontinuity of $\mathcal W$ and $\sigma$ (cf. Lemma \ref{lem1w} (i)), implies that $\liminf_{k\to\infty}[\sigma(\bar V'_k(t))+\mathcal W(\bar V_k(t))]\geq [\sigma( U_1(t))+\mathcal W(U(t))]$ for every $t\in\R$, thus by Fatou's Lemma we obtain
\begin{equation}\label{la2}
\int_{\R}[\sigma( U_1(t))+\mathcal W(U(t))]\dd t\leq\liminf_{k\to\infty} \int_{\R}[\sigma(\bar V'_k(t))+\mathcal W(\bar V_k(t))]\dd t.
\end{equation}
Combining \eqref{la1}\ with \eqref{lala2} it is clear that $\mathcal J_\R(U)\leq\liminf_{k\to\infty}\mathcal J_{\R}(V_k)$. To conclude it remains to show that $U\in\mathcal A$. In view of the above property (b) it follows that $U(t)\in \mathcal F^-$, for every $t\leq 0$. Similarly, in view of (a) and (c), we have $U(t)\in\mathcal F^+$, for $t\geq T>0$ large enough. 
\end{proof}

\begin{proof}[Existence of the double layered solution] We first identify $U$ with a map $u \in H^2_{\mathrm{loc}}(\R^2;\R^m)$:
\begin{lemma}\label{sobolev}
Writing $U(t)=\ee_0+H(t)$, with $$H\in H^2_{\mathrm{loc}}(\R;L^2(\R;\R^m))\subset L^2_{\mathrm{loc}}(\R;L^2(\R;\R^m)),$$ and identifying $H$ with a $L^2_{\mathrm{loc}}(\R^2;\R^m)$ map via $h(t,x):=[H(t)](x)$, we have 
\begin{itemize}
\item $h \in H^2_{\mathrm{loc}}(\R^2;\R^m)$, $h_t ,h_{tt},h_{tx}\in L^2(\R^2;\R^m)$, and $h_x,h_{xx}\in L^2((t_1,t_2)\times \R;\R^m)$ for every interval $[t_1,t_2]\subset\R$.
\item Moreover, $\|h_x\|_{L^2((t_1,t_2)\times \R;\R^m)}^2+ \|h_{xx}\|_{L^2((t_1,t_2)\times \R;\R^m)}^2\leq C_0 (|t_2-t_1|)$, for a constant $C_0>0$ depending only on $|t_2-t_1|$.
\end{itemize}
\end{lemma}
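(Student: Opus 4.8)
The plan is to unpack the single fact that $U$ is a minimizer of $\mathcal J$ in $\mathcal A$ — so that $\mathcal J_\R(U)=\inf_{V\in\mathcal A_b}\mathcal J_\R(V)\le\mathcal J_0<\infty$ — into four finiteness statements, one per nonnegative term in \eqref{action}, and then translate each of them into a regularity property of $h$. Writing $H:=U-\ee_0$, the linear isometry $\mathcal H\ni v\mapsto v-\ee_0\in L^2(\R;\R^m)$ immediately gives $H\in H^2_{\mathrm{loc}}(\R;L^2(\R;\R^m))$; since $\ee_0$ is independent of $t$ we have $U'=H'$, $U''=H''$, and (using $H^1_{\mathrm{loc}}(\R;L^2)\hookrightarrow C(\R;L^2)$) $U'(t)=H'(t)\in L^2(\R;\R^m)$ for every $t$. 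Dropping, in turn, all but one of the four nonnegative terms in the integrand of \eqref{action} yields
\[
\int_\R\|U''(t)\|^2_{L^2}\,\dd t\le 2\mathcal J_\R(U),\qquad \int_\R\|U'(t)\|^2_{L^2}\,\dd t\le\tfrac2\beta\mathcal J_\R(U),\qquad \int_\R\sigma(U'(t))\,\dd t\le\mathcal J_\R(U),
\]
while $\int_{t_1}^{t_2}\mathcal W(U(t))\,\dd t\le\mathcal J_\R(U)$ for every interval $[t_1,t_2]$. I shall also use the elementary bounds $J_\R(u)\ge\frac\beta2\|u'\|^2_{L^2}$ and $J_\R(u)\ge\frac12\|u''\|^2_{L^2}$ (valid since $W\ge 0$), where $u',u''$ here denote $x$-derivatives.

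First I would identify $h$ with its $L^2_{\mathrm{loc}}(\R^2;\R^m)$ representative and invoke the standard identification of the Bochner space $H^j(\R;L^2(\R;\R^m))$ with the space of $L^2_{\mathrm{loc}}(\R^2;\R^m)$ maps whose distributional $t$-derivatives up to order $j$ lie in $L^2(\R^2;\R^m)$ (cf.\ the references on vector-valued Sobolev spaces cited in the introduction). This yields $h_t=\partial_t h$ and $h_{tt}=\partial_{tt}h$ in $L^2(\R^2;\R^m)$, with $\|h_t\|^2_{L^2(\R^2)}=\int_\R\|U'(t)\|^2_{L^2}\,\dd t$ and $\|h_{tt}\|^2_{L^2(\R^2)}=\int_\R\|U''(t)\|^2_{L^2}\,\dd t$, both finite by the first paragraph.

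Next I would obtain the $x$-derivatives by mollifying in the $x$-variable. For a.e.\ $t$ one has $\sigma(U'(t))<\infty$, so $U'(t)=H'(t)$ has a distributional $x$-derivative in $L^2(\R;\R^m)$ and $\int_\R\|[H'(t)]_x\|^2_{L^2}\,\dd t=\int_\R\sigma(U'(t))\,\dd t<\infty$; convolving $h_t$ with a mollifier $\rho_\delta(x)$, using $\|\partial_x(h_t(t,\cdot)*\rho_\delta)\|_{L^2(\R)}\le\|[H'(t)]_x\|_{L^2(\R)}$, and passing to the weak limit in $L^2(\R^2;\R^m)$ (with Fubini matching the two limits) shows $\partial_x h_t\in L^2(\R^2;\R^m)$; equality of mixed distributional derivatives then gives $h_{tx}=\partial_{tx}h=\partial_x h_t\in L^2(\R^2;\R^m)$, with $\|h_{tx}\|^2_{L^2(\R^2)}=\int_\R\sigma(U'(t))\,\dd t$. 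For a.e.\ $t$ one also has $\mathcal W(U(t))<\infty$, hence $J_\R(U(t))=\mathcal W(U(t))+J_{\mathrm{min}}$ and the $x$-derivatives $[U(t)]',[U(t)]''$ lie in $L^2(\R;\R^m)$; the inequalities noted above give $\frac\beta2\|[U(t)]'\|^2_{L^2}\le\mathcal W(U(t))+J_{\mathrm{min}}$ and $\frac12\|[U(t)]''\|^2_{L^2}\le\mathcal W(U(t))+J_{\mathrm{min}}$. Since $\ee_0\in W^{2,\infty}(\R;\R^m)$ has compactly supported derivatives, $\ee_0',\ee_0''\in L^2(\R;\R^m)$ and the slices satisfy $h_x(t,\cdot)=[U(t)]'-\ee_0'$, $h_{xx}(t,\cdot)=[U(t)]''-\ee_0''$; integrating the pointwise bounds over $(t_1,t_2)$, using $\int_{t_1}^{t_2}\mathcal W(U(t))\,\dd t\le\mathcal J_\R(U)$, and running the same mollification argument on the strip $(t_1,t_2)\times\R$ gives $h_x,h_{xx}\in L^2((t_1,t_2)\times\R;\R^m)$ together with $\|h_x\|^2_{L^2((t_1,t_2)\times\R)}+\|h_{xx}\|^2_{L^2((t_1,t_2)\times\R)}\le C_0(|t_2-t_1|)$, where one may take $C_0(s)=a+bs$ with $a,b>0$ depending only on $\mathcal J_\R(U)$, $J_{\mathrm{min}}$, $\beta$, $\|\ee_0'\|_{L^2}$, $\|\ee_0''\|_{L^2}$ — all fixed. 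Since $h$ and all its distributional derivatives of order $\le 2$ now lie in $L^2_{\mathrm{loc}}(\R^2;\R^m)$, we conclude $h\in H^2_{\mathrm{loc}}(\R^2;\R^m)$.

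The main obstacle is the passage from the fibered information — for a.e.\ $t$ the slice $x\mapsto h(t,x)$ has $x$-derivatives in $L^2(\R;\R^m)$ with controlled norms — to bona fide distributional derivatives of the two-variable map $h$ on the whole plane, most delicately for the mixed derivative $h_{tx}$, where one must check that collecting, for a.e.\ $t$, the $x$-derivative of $U'(t)$ produces exactly $\partial_{tx}h$ in $\mathcal D'(\R^2)$. This is precisely the Bochner--Sobolev/anisotropic-Sobolev identification, handled by the mollification-plus-weak-compactness scheme sketched above; everything else is routine bookkeeping together with the trivial inequalities $J_\R(u)\ge\frac\beta2\|u'\|^2_{L^2}$, $J_\R(u)\ge\frac12\|u''\|^2_{L^2}$.
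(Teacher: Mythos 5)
Your proof is correct and follows essentially the same route as the paper's: extract slicewise finiteness of the relevant $x$-derivatives from the finiteness of each nonnegative term in $\mathcal J_\R(U)$, and then upgrade to genuine distributional derivatives on $\R^2$ with the stated $L^2$ bounds. The only (inessential) technical difference is that you regularize by mollifying in $x$ and passing to a weak limit, whereas the paper uses difference quotients, and you invoke the standard Bochner–Sobolev identification for $h_t=\partial_t h$, $h_{tt}=\partial_{tt}h$, which the paper verifies directly by testing against $C^\infty_0(\R^2;\R^m)$ functions and approximating $H$ by $C^1$ maps.
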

\begin{proof}
We recall that given any interval $(t_1,t_2)$, we can identify $L^2((t_1,t_2)\times \R;\R^m)$ with $L^2((t_1,t_2);L^2(\R;\R^m))$ via the canonical isomorphism
\begin{align*}\label{isom}
L^2((t_1,t_2)\times \R;\R^m)&\simeq L^2((t_1,t_2);L^2(\R;\R^m))\\
f&\simeq [(t_1,t_2)\ni t \mapsto [F(t)]:x\mapsto f(t,x)], \ F(t)\in L^2(\R;\R^m).
\end{align*}
Let $g_1(t,x):=[U'(t)](x)$, $g_2(t,x):=[U''(t)](x)$, with $g_1,g_2\in L^2(\R^2;\R^m)$, and let us prove that $h_t=g_1$.
Given a function $\phi \in C^\infty_0(\R^2;\R^m)$, we also view it as a map $\Phi\in C^1(\R; L^2(\R;\R^m))$, $t\mapsto \Phi(t)$, by setting $[\Phi(t)](x):=\phi(t,x)$.
Assuming that $\supp\Phi\subset(t_1,t_2)$, we have $$\int_{\R^2} [h\cdot\phi_t+g_1\cdot\phi]=\int_{t_1}^{t_2} (\langle H(t),\Phi_t(t)\rangle_{L^2(\R;\R^m)}+\langle H_t(t),\Phi(t)\rangle_{L^2(\R;\R^m)})\dd t,$$
and clearly the second integral vanishes if $H\in C^1([t_1,t_2]; L^2(\R;\R^m))$. Since $H$ can be approximated in $H^1((t_1,t_2); L^2(\R;\R^m))$ by $C^1([t_1,t_2]; L^2(\R;\R^m))$ maps, we deduce that $\int_{\R^2} [h\cdot\phi_t+g_1\cdot\phi]=0$, i.e. $h_t=g_1$. Similarly, we can prove that $h_{tt}=\frac{\partial g_1}{\partial t}=g_2$. On the other hand, to establish that $h_{tx}\in L^2(\R^2;\R^m)$, we use difference quotients. Indeed, for a.e. $t\in\R$, and for every $\xi \in (-1,1)$, we have
\begin{equation}\label{soba2}
\int_\R\Big|\frac{g_1(t,x+\xi)-g_1(t,x)}{\xi}\Big|^2\dd x \leq \sigma(g_1(t,\cdot))=\sigma (U'(t))<\infty,
\end{equation}
thus after an integration, we obtain 
\begin{equation}\label{soba3}
\int_{\R^2}\Big|\frac{g_1(t,x+\xi)-g_1(t,x)}{\xi}\Big|^2\dd t \dd x \leq \int_{\R}\sigma(U'(t))\dd t<\infty\Rightarrow h_{tx}\in L^2(\R^2;\R^m).
\end{equation}
Finally, since $\int_\R\mathcal W(U(t))\dd t<\infty$ it follows that for a.e. $t\in\R$, we have $\mathcal W(U(t))<\infty$, and $U(t)\in \ee_0+H^2(\R;\R^m)$.
By using again difference quotients, we can see that
\begin{equation}\label{sob2}
\int_\R\Big|\frac{h(t,x+\xi)-h(t,x)}{\xi}\Big|^2\dd x \leq \int_\R |h_x|^2 \leq \frac{4(\mathcal W(U(t))+J_{\mathrm{min}})}{\beta} +2\|\ee'_0\|_{L^2(\R;\R^m)}^2,
\end{equation}
holds for a.e. $t\in\R$, and for $\xi\in(-1,1)$. As a consequence, the difference quotients $\frac{h(t,x+\xi)-h(t,x)}{\xi}$ (with $\xi\in(-1,1)$) are bounded in $L^2((t_1,t_2)\times\R;\R^m)$ for every interval $[t_1,t_2]\subset\R$, since an integration of \eqref{sob2} gives 
\begin{equation}\label{sob3}
\int_{t_1}^{t_2}\int_\R\Big|\frac{h(t,x+\xi)-h(t,x)}{\xi}\Big|^2\dd t\dd x \leq \frac{4\int_\R\mathcal W(U(t))\dd t}{\beta} +\Big(\frac{4J_{\mathrm{min}}}{\beta}+2\|\ee'_0\|_{L^2(\R;\R^m)}^2\Big)(t_2-t_1)=:C_1(|t_2-t_1|).
\end{equation}
This implies that $h_x \in L^2((t_1,t_2)\times\R;\R^m)$, and $\| h_x\|^2_{L^2((t_1,t_2)\times\R;\R^m)}\leq C_1(|t_2-t_1|)$. The proof that $h_{xx} \in L^2((t_1,t_2)\times\R;\R^m)$, with $\| h_{xx}\|^2_{L^2((t_1,t_2)\times\R;\R^m)}\leq C_2(|t_2-t_1|):=4\int_\R\mathcal W(U(t))\dd t+\big(4J_{\mathrm{min}}+2\|\ee'_0\|_{L^2(\R;\R^m)}^2\Big)(t_2-t_1)$ is similar.
\end{proof}

Next, we shall establish that the map $u(t,x):=[U(t)](x)$ is a weak solution of \eqref{system}. Given a function $\phi \in C^1_0(\R^2;\R^m)$, we also view it as a map $\Phi\in C^1_0(\R; L^2(\R;\R^m))$, $t\mapsto \Phi(t)$, by setting $[\Phi(t)](x):=\phi(t,x)$.
For every $\lambda \in \R$, it is clear that
\begin{equation}\label{var1}
\mathcal J_\R(U)\leq\mathcal J_\R(U+\lambda\Phi),
\end{equation}
and
\begin{subequations}\label{var33}
\begin{equation}
\frac{\dd}{\dd\lambda}\Big|_{\lambda=0}\int_\R\frac{1}{2}\|U'(t)+\lambda \Phi'(t)\|_{L^2(\R;\R^m)}^2\dd t=\int_\R\langle U'(t),\Phi'(t)\rangle_{L^2(\R;\R^m)}\dd t,
\end{equation}
\begin{equation}
\frac{\dd}{\dd\lambda}\Big|_{\lambda=0}\int_\R\frac{1}{2}\|U''(t)+\lambda \Phi''(t)\|_{L^2(\R;\R^m)}^2\dd t=\int_\R\langle U''(t),\Phi''(t)\rangle_{L^2(\R;\R^m)}\dd t.
\end{equation}
\end{subequations}
On the other hand, since $\int_\R [\sigma(U'(t))+\mathcal W(U(t))]\dd t<\infty$, it follows that for a.e. $t\in\R$, we have $\sigma(U'(t))+\mathcal W(U(t))<\infty$, and $U(t)\in \ee_0+H^2(\R;\R^m)$ as well as $U'(t)\in H^1(\R;\R^m)$.
Our claim is that
\begin{equation}\label{var22}
\frac{\dd}{\dd\lambda}\Big|_{\lambda=0}\int_\R[\sigma (U'(t)+\lambda\Phi'(t))+\mathcal W(U(t)+\lambda\Phi(t))]\dd t=\int_{\R}\psi(t)\dd t,
\end{equation}
where 
\begin{equation*}
\psi(t):=\int_\R\big[2\frac{\dd [U'(t)]}{\dd x}\cdot \frac{\partial^2\phi(t,x)}{\partial t \partial x} +\frac{\dd^2 [U(t)]}{\dd x^2}\cdot \frac{\partial^2\phi(t,x)}{\partial x^2}+\beta \frac{\dd [U(t)]}{\dd x}\cdot \frac{\partial\phi(t,x)}{\partial x}+\nabla W([U(t)](x))\cdot\phi(t,x)\big]\dd x.
\end{equation*}
Indeed, we first notice that for every $\lambda\neq 0$, the functions
$\psi_\lambda(t):=\frac{1}{\lambda}[\sigma (U'(t)+\lambda\Phi'(t))+\mathcal W(U(t)+\lambda \Phi(t))-\sigma (U'(t))-\mathcal W(U(t))]$ are defined a.e. Moreover, we can see that $\psi_\lambda(t)$ is equal to
\begin{multline*}
\int_\R\Big[2\frac{\dd [U'(t)]}{\dd x}\cdot \frac{\partial^2\phi(t,x)}{\partial t\partial x}+\frac{\dd^2 [U(t)]}{\dd x^2}\cdot \frac{\partial^2\phi(t,x)}{\partial x^2}+\beta \frac{\dd [U(t)]}{\dd x}\cdot \frac{\partial\phi(t,x)}{\partial x} \\
+\lambda \Big|\frac{\partial^2\phi(t,x)}{\partial t\partial x}\Big|^2+\frac{\lambda}{2} \Big|\frac{\partial^2\phi(t,x)}{\partial x^2}\Big|^2+\frac{\beta\lambda}{2} \Big|\frac{\partial\phi(t,x)}{\partial x}\Big|^2+\nabla W([U(t)](x)+c_\lambda(t,x)\lambda \phi(t,x))\cdot\phi(t,x)\Big]\dd x,
\end{multline*}
with $0\leq c_\lambda(t,x)\leq 1$. As a consequence, we obtain $\lim_{\lambda\to 0}\psi_\lambda(t)=\psi(t)$ for a.e. $t\in\R$. Finally, setting $\kappa=\sup\{|\nabla W(v)|: |v|\leq \|u\|_{L^\infty(\supp \phi;\R^m)}+\|\phi\|_{L^\infty(\R^2;\R^m)}\}$, there is 
an integrable function 
\begin{equation*}
\Psi(t)=\sigma(U'(t))+\mathcal W(U(t))+\int_\R\Big[2\Big|\frac{\partial^2\phi(t,x)}{\partial t\partial x}\Big|^2+\Big|\frac{\partial^2\phi(t,x)}{\partial x^2}\Big|^2+\beta \Big|\frac{\partial\phi(t,x)}{\partial x}\Big|^2+\kappa |\phi(t,x)|\Big]\dd x
\end{equation*}
such that $|\psi_\lambda(t)|\leq \Psi(t)$ holds a.e., when $|\lambda|\leq 1$. Thus, we deduce \eqref{var22} by dominated convergence. 

Now, we gather the previous results to conclude.
In view of \eqref{var1}, \eqref{var33} and \eqref{var22}, the minimizer $U$ satisfies the Euler-Lagrange equation
\begin{equation}\label{euler1}
\int_{\R}(\langle U''(t),\Phi''(t)\rangle_{L^2(\R;\R^m)}+\beta\langle U'(t),\Phi'(t)\rangle_{L^2(\R;\R^m)}+\psi(t))\dd t=0.
\end{equation}
 which is equivalent to
\begin{equation}\label{euler2}
\int_{\R^2}(\nabla^2 u\cdot \nabla^2\phi+\beta\nabla u\cdot \nabla\phi +\nabla W(u)\cdot \phi)=0.
\end{equation}
By elliptic regularity, it follows that $u\in C^4(\R^2;\R^m)$ is a classical solution of \eqref{system}. On the other hand, it is clear in view of Lemma \ref{lem1aa} that \eqref{lay1baa} holds. Thus to complete the proof of Theorem \ref{connh2}, it remains to show \eqref{lay2baa}. Let us first establish the uniform continuity of $u$ in the strips $[t_1,t_2]\times\R$ (with $[t_1,t_2]\subset\R$). To see this, we shall consider an arbitrary disc $D$ of radius $1$ included in the strip $[t_1,t_2]\times\R$, and check that for such discs, $\|u\|_{H^2(D;\R^m)}$ is uniformly bounded. Indeed, we notice that $\|u\|_{L^2(D;\R^m)}$ is uniformly bounded (independently of $D$), since the function $\R\ni t\mapsto \|u(t,\cdot)-\ee_0(\cdot)\|_{L^2(\R;\R^m)}$ is continuous. Next, in view of the $L^2$ bounds obtained in Lemma \ref{sobolev} for the first and second derivatives of $u$, we deduce our claim. To prove \eqref{lay2baa}, assume by contradiction the existence of a sequence $(s_k,x_k)$ such that $\lim_{k\to\infty}x_k=\infty$, $s_k\in[t_1,t_2]$, and $|u(s_k,x_k)-a^+|>\epsilon>0$. As a consequence of the uniform continuity of $u$, we can construct a sequence of disjoint discs of fixed radius, centered at $(s_k,x_k)$, over which $W(u)$ is bounded uniformly away from zero. This clearly violates the finiteness of $E_{[t_1,t_2]\times\R}(u)=\mathcal J_{[t_1,t_2]}(U)+J_{\mathrm{min}}(t_2-t_1)$. 
\end{proof}

\section*{Acknowledgments}
This research is supported by REA - Research Executive Agency - Marie Sk{\l}odowska-Curie Program (Individual Fellowship 2018) under Grant No. 832332, by the Basque Government through the BERC 2018-2021 program, by the Ministry of Science, Innovation and Universities: BCAM Severo Ochoa accreditation SEV-2017-0718, by project MTM2017-82184- R funded by (AEI/FEDER, UE) and acronym ``DESFLU'', and by the National Science Centre, Poland (Grant No. 2017/26/E/ST1/00817). 

\bibliographystyle{plain}

\end{document}